\newcommand{\pattern}[4]{
 \raisebox{0.6ex}{
 \begin{tikzpicture}[scale=0.35, baseline=(current bounding box.center), #1]
   \foreach \x/\y in {#4}
     \fill[pattern= north east lines, pattern color=black!50] (\x,\y)
rectangle +(1,1);
   \draw (0.01,0.01) grid (#2+0.99,#2+0.99);
   \foreach \x/\y in {#3}
     \filldraw (\x,\y) circle (6pt);
 \end{tikzpicture}}\;
}
\renewcommand{\S}[1]{\ensuremath{\mathfrak{S}_{#1}}} 
\newcommand{\Sav}[2]{\S{#1}(#2)} 
\newcommand{\sav}[2]{\ensuremath{ \bigl| \Sav{#1}{#2} \bigr|}}
\newcommand{\SP}[1]{\ensuremath{\mathfrak{F}_{#1}}}
\newcommand{\SPav}[2]{\SP{#1}(#2)}
\newcommand{\SPH}[2]{\ensuremath{\mathfrak{F}_{#1}^{#2}}}
\newcommand{\SPHav}[3]{\SPH{#1}{#2}(#3)}
\newcommand{\sphav}[3]{\ensuremath{ \bigl| \SPHav{#1}{#2}{#3} \bigr|}}
\newcommand{\we}{\sim}
\newcommand{\swe}{\ensuremath{\stackrel{s}{\sim}}}  
\newcommand{\spswe}{\ensuremath{\stackrel{f\!s}{\sim}}}
\newcommand{\height}[1]{ ht( {#1} ) }
\newcommand{\width}[1]{ wd( {#1} ) }
\newcommand{\oi}{\cong}
\newcommand{\red}{\ensuremath{\mathrm{red}}}
\newtheorem{theorem}{Theorem}
\newtheorem{definition}{Definition}
\newtheorem{lemma}{Lemma}
\newtheorem{proposition}{Proposition}
\newtheorem{corollary}{Corollary}
\newtheorem{conjecture}{Conjecture}
\newtheorem{remark}{Remark}
\newtheorem*{theorem*}{Theorem}
\newtheorem*{corollary*}{Corollary}
\newtheorem*{proposition*}{Proposition}
\newtheorem*{conjecture*}{Conjecture}
\renewcommand{\d}{\makebox[1.1ex]{\rule[.58ex]{.71ex}{.15ex}}}
\newcommand{\n}{\square}
\newif\ifgraph
\begin{document}

\title{Shape-Wilf-equivalences for vincular patterns}
\author{Andrew M. Baxter}
\address{
Penn State University Mathematics Dept. \\
University Park, State College, PA 16802 \\
(814) 865-5002 / Fax (814) 865-3735 \\
}

\begin{abstract}
We extend the notion of shape-Wilf-equivalence to vincular patterns (also known as ``generalized patterns'' or ``dashed patterns'').  First we introduce a stronger equivalence on patterns which we call filling-shape-Wilf-equivalence.  When vincular patterns $\alpha$ and $\beta$ are filling-shape-Wilf-equivalent, we prove that $\alpha\oplus\sigma$ and $\beta\oplus\sigma$ must also be filling-shape-Wilf-equivalent.  We also discover two new pairs of patterns which are filling-shape-Wilf-equivalent: when $\alpha$, $\beta$, and $\sigma$ are nonempty consecutive patterns which are Wilf-equivalent, $\alpha\oplus\sigma$ is filling-shape-Wilf-equivalent to $\beta\oplus\sigma$; and for any consecutive pattern $\alpha$, $1\oplus\alpha$  is filling-shape-Wilf-equivalent to $1\ominus\alpha$.  These new equivalences imply many new Wilf-equivalences for vincular patterns.
\end{abstract}

\keywords{permutation pattern, vincular pattern, Wilf-equivalence, shape-Wilf-equivalence}
\subjclass{05A05, 05A19}

\maketitle

\section{Introduction}

Let $[n]=\{1,2,\dotsc,n\}$ and define the \emph{reduction} of a word $w= w_1 w_2 \dotsm w_k \in [n]^k$, denoted $\red(w)$, to be the word in $[k]^k$ where the $i^{th}$ smallest letter(s) of $w$ is replaced by $i$.  For example $\red(51373)= 31242$.  In the case where $w$ has no repeated letters, $\red(w)$ is a permutation in $\S{k}$.  If $\red(w) = \red(w')$ we write $w\oi w'$ and say that $w$ and $w'$ are \emph{order-isomorphic}.  Equivalently, one can say that $w\oi w'$ if for every pair of indices $(i,j)$ $w_i \leq w_j$ if and only if $w'_i \leq w'_j$.

Permutation $\pi\in\S{n}$ \emph{contains} $\sigma\in\S{k}$ \emph{as a classical pattern} if there is a subsequence $\pi_{i_1}\pi_{i_2}\dotsm\pi_{i_k}$ for $1\leq i_1 < i_2 < \dotsm < i_k\leq n$ such that  $\pi_{i_1}\pi_{i_2}\dotsm\pi_{i_k}\oi\sigma$.  Vincular patterns\footnote{These were introduced as ``generalized patterns'' in \cite{Babson2000} but have also been called ``dashed patterns.''   Claesson coined the term ``vincular'' to underline the connection with the recently introduced bivincular patterns in \cite{Bousquet2010}.} resemble classical patterns, except some of the indices $i_j$ must be consecutive.  Formally, we can consider a vincular pattern as a pair $(\sigma, X)$ for permutation $\sigma\in\S{k}$ and a set of adjacencies $X\subseteq [k-1]=\{1,\ldots, k-1\}$.   The subsequence $\pi_{i_1}\pi_{i_2}\dotsm\pi_{i_k}$ for $1\leq i_1 < i_2 < \dotsm < i_k\leq n$  is a \emph{copy} (or \emph{occurrence}) of $(\sigma, X)$ if $\pi_{i_1}\pi_{i_2}\dotsm\pi_{i_k}\oi\sigma$ \emph{and} $i_{j+1}-i_{j} = 1$ for each $j\in X$.  If a copy of $(\sigma, X)$ appears in $\pi$, we say that $\pi$ \emph{contains} $(\sigma, X)$, and otherwise we say $\pi$ \emph{avoids} $(\sigma, X)$.
In practice we write $(\sigma, X)$ as the permutation $\sigma$ with a dash between $\sigma_j$ and $\sigma_{j+1}$ if $j\not\in X$ and refer to ``the vincular pattern $\sigma$'' without explicitly writing $X$.  For example, $(1243, \{3\})$ is written $1\d2\d43$.  The permutation $162534$ has a copy of $1\d2\d43$ as witnessed by the subsequence $1253$, but the subsequence $1254$ is not a copy of $1\d2\d43$ since the 5 and 4 are not adjacent in $\pi$.  \emph{Classical} patterns are patterns of the form $(\sigma, \emptyset)$ where no adjacencies are required, while \emph{consecutive} patterns are patterns of the form $(\sigma, [k-1])$ where the copies of $\sigma$ must appear as subfactors $\pi_i \pi_{i+1} \cdots \pi_{i+k-1} \oi \sigma$.

Classical patterns exhibit several trivial symmetries which extend to vincular patterns as well.  The \emph{reverse} of a permutation $\pi=\pi_1\pi_2 \cdots \pi_n$ is given by $\pi^r=\pi_n\pi_{n-1}\cdots\pi_1$ and the \emph{complement} by $\pi^c=(n+1-\pi_1)(n+1-\pi_2)\cdots(n+1-\pi_n)$.  For vincular pattern $(\sigma, X)$ of length $k$, the reverse of $(\sigma, X)$ is the pattern $(\sigma,X)^{r} = (\sigma^r,  \{k-x : x \in X\})$.  Thus the reverse of $13\d2\d4$ is $4\d2\d31$.  Similarly the complement of $(\sigma, X)$ is the pattern $(\sigma,X)^{c} = (\sigma^c, X)$.  Thus we see $(13\d2\d4)^c = 42\d3\d1$.  Observe that $\pi$ contains $(\sigma, X)$ if and only if $\pi^r$ contains $(\sigma, X)^r$ and likewise for the complement.   

The subset of $\S{n}$ of permutations avoiding $\sigma$ is denoted $\Sav{n}{\sigma}$.   Two patterns $\sigma$ and $\tau$ are \emph{Wilf-equivalent} if $\sav{n}{\sigma} = \sav{n}{\tau}$ for all $n\geq 0$, and we denote this $\sigma \we \tau$.  From the preceding remarks on symmetry it is clear that $\sigma \we \sigma^{r} \we \sigma^{c} \we \sigma^{rc}$.

\subsection{Fillings and Shape-Wilf-Equivalence}

We will consider permutations as configurations of non-attacking rooks on a partial chessboard.  To ease later exposition we break with convention and orient our boards differently. 

A \emph{board} is a finite subset of $\mathbb{Z}^2$, where each element of the board is called a \emph{cell} and depicted as a box.  A \emph{Young board} is a board where columns have weakly decreasing heights from left to right and rows have weakly decreasing widths from bottom to top (note that we use the French orientation).   Without loss of generality, we may assume the bottom-left corner cell of a nonempty Young board is $(1,1)$ and index cells as per standard Euclidean coordinates.  We say $(c',r')$ is \emph{to the left of} $(c,r)$ if $c'<c$ and \emph{below} $(c,r)$ if $r'<r$.   A board is sometimes called a \emph{shape}, and we use the two terms interchangeably.  In the sequel, all boards discussed are Young boards.  

Let $\lambda = (\lambda_1, \lambda_2, \dotsc, \lambda_n)$ where $\lambda_i \geq \lambda_{i+1}$ and $\lambda_n>0$ denote a Young board  $\{ (i, j): 1 \leq j \leq \lambda_i, 1\leq i \leq n \}$.  Note the change from standard convention: column $i$ has height $\lambda_i$, while row $j$ has width $\lambda^{\top}_j$ where $\lambda^{\top}$ is the conjugate partition of $\lambda$.  This change in convention makes much of the later exposition easier.   Figure \ref{fig:Young55433} shows the Young board $\lambda=(5,5,4,3,3)$.
Note that $\lambda$ is a Young board if and only if $(c, r) \in \lambda$ implies $(c', r')\in\lambda$ for every $1\leq r' \leq r$ and $1\leq c' \leq c$.  The \emph{height} of the Young board $\lambda = (\lambda_1, \dotsc, \lambda_n)$ is $\lambda_1$, denoted $\height{\lambda}$, and the \emph{width} is $n$, denoted $\width{\lambda}$. A Young board $(\lambda_1, \dotsc, \lambda_n)$ is \emph{rectangular} if $\lambda_1 = \dotsm =\lambda_n$.

\ifgraph
\begin{figure} 
          \centering
	\includegraphics[width=.3\textwidth]{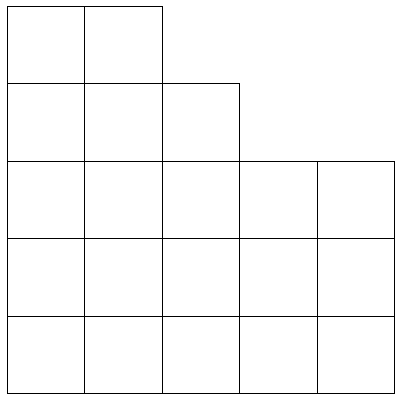}
	\caption{The Young board $\lambda=(5,5,4,3,3)$}
	\label{fig:Young55433}
\end{figure}
\fi

A \emph{filling} of a board $\lambda$ is an assignment of the integers 0 and 1 to the cells of $\lambda$ such that each row and each column contains at most one 1.\footnote{In some works, such as \cite{Rubey2011}, this would be called a \emph{partial 01-filling}.}  A \emph{standard filling} is a filling such that each row and column has exactly one 1 in it.  In diagrams, we will draw a cell filled with a 1 as a $\bullet$ in the box while leaving empty cells filled with a 0.  Thus we call a column or row \emph{empty} if it contains no 1.


Let $\mathbb{P}$ be the set of positive integers.  For Young board $(\lambda_1, \dotsc, \lambda_n)$, any filling can be represented uniquely by a word in $w \in (\mathbb{P} \cup \{\n\})^n$ according to the following rules:
\begin{itemize}
 \item If cell $(c, r)$ is filled with a 1, then $w_c = r$.
 \item If the $c^{th}$ column of $\lambda$ is empty then $w_c = \n$.
\end{itemize}
As a consequence of this representation, if the $r^{th}$ row of $\lambda$ is empty, then $r$ does not appear in the word $w$, and thus $w$ indicates both the empty rows and columns.

Conversely a word $w \in ( \mathbb{P} \cup \{\n\})^n$ represents a filling of $(\lambda_1, \dotsc, \lambda_n)$ if and only if the following criteria are satisfied:
\begin{enumerate}[(1)]
 \item If there are distinct indices $i$ and $j$ such that $w_i = w_j$, then $w_i = w_j = \n$.
 \item If $w_i \neq \n$, then $w_i \leq \lambda_i$.
\end{enumerate}
Criterion (1) implies that the numerical letters of $w$ (i.e., those letters other than $\n$) form a permutation of a subset of $[n]$.  We call the symbol $\n$ a \emph{spacer} and call words satisfying criterion (1) \emph{spaced permutations}.

Vincular pattern containment extends naturally to spaced permutations (and thus to fillings) using the same definitions but working in the poset $\mathbb{P}+\{\n\}$ instead of $\mathbb{P}$ to determine order-isomorphism, where $\n$ is incomparable to each number $x\in \mathbb{P}$.  One can accordingly extend the reduction operator to spaced permutations so that $\red(w)$ preserves the positions of the spacers, e.g., $\red(51\n37\n\n3) = 31\n24\n\n2$. Thus we see the spaced permutation $412\n5$ contains a copy of $2\d1\d3$, but avoids $2\d13$ because of the position of the spacer.  In general, the presence of a spacer in a spaced permutation affects the presence of a pattern by affecting whether letters are adjacent.\footnote{Spaced permutations first appeared implicitly in connection to enumeration schemes for vincular patterns in \cite{Baxter2012}, where a spacer was called a ``null symbol'' and denoted $\bullet$.  While spaced permutation bear some superficial resemblence to partial permutations discussed in \cite{Claesson2011}, in that work a ``hole'' (denoted by $\diamond$) could be replaced by any number when considering pattern avoidance, while in a spaced permutation $\n$ is inert and incomparable to other letters.}  

Let $\SP{\lambda}$ denote the set of fillings of board $\lambda$, and $\S{\lambda}$ denote the set of standard fillings of $\lambda$.  Observe that a filling of $\lambda$ is a standard filling if and only if its word representation contains no copies of $\n$ and each letter $\pi_i \leq \lambda_i$.  Furthermore a board $(\lambda_1, \dotsc, \lambda_n)$ admits a standard filling if and only if each $\lambda_i$ satisfies $n-i+1 \leq \lambda_i \leq n$.   Figure \ref{fig:trans} illustrates the filling $453\n21$ for Young board $(5,5,4,3,3,3)$.  In general we will not draw a distinction between a filling and its word representation.

\ifgraph
\begin{figure}
	\centering
	\includegraphics[width=.3\textwidth]{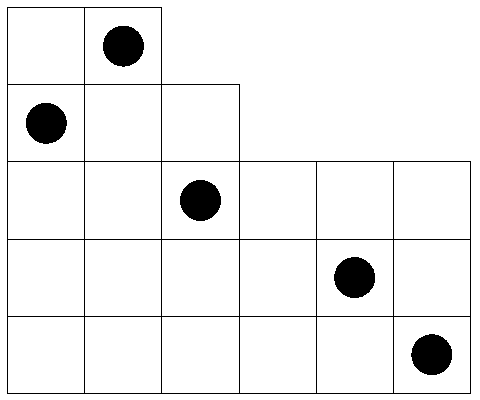}
	\caption{The filling $\pi=453\n21$ of $\lambda=(5,5,4,3,3,3)$.  Cells filled with a $1$ are marked with a $\bullet$ and cells filled with a $0$ are left blank..}
	\label{fig:trans}
\end{figure}
\fi

Pattern containment is stricter for fillings than it is for permutations and depends on the board $\lambda$.  
For board $\lambda$, a filling $\pi$  \emph{contains a copy of vincular pattern $\sigma$} if there is a subsequence $\pi_{i_1}\pi_{i_2}\dotsm\pi_{i_k}$ for $1\leq i_1 < i_2 < \dotsm < i_k\leq n$ such that  $\pi_{i_1}\pi_{i_2}\dotsm\pi_{i_k}$ is a copy of vincular pattern $\sigma$ in spaced permutation $\pi$ and $\lambda$ contains all cells in $\{ i_1, \dotsc, i_k\} \times \{\pi_{i_1}, \dotsc, \pi_{i_k}\}$.   If $\lambda$ is a Young board, this second criterion is equivalent to the simpler requirement that the upper-right corner $(i_k, \max \{\pi_{i_1},\pi_{i_2},\cdots,\pi_{i_k}\})$ lies in $\lambda$.
 For example in Figure \ref{fig:trans} we see the filling $\pi=453\n21$ in $(5,5,4,3,3,3)$ contains $3\d21$ in the last three entries since $\pi_3 > \pi_5 > \pi_6$ and $\lambda$ contains $(6,3)$.  The filling $\pi=453\n21$ does not contain $231$, even though $\red(\pi_1 \pi_2 \pi_3)=231$, since $\lambda$ does not contain the cell $(3,5)$.  We let $\SPav{\lambda}{\sigma}$ denote the set of fillings of $\lambda$ which avoid $\sigma$, and similarly let $\Sav{\lambda}{\sigma}$ denote the set of standard fillings of $\lambda$ which avoid $\sigma$.  

Note that we may consider $\Sav{\lambda}{\sigma}\subseteq \S{n}$ where $n=\width{\lambda}=\height{\lambda}$ by considering the word representation of a standard filling of $\lambda$.  Furthermore $\Sav{\lambda}{\sigma} \supseteq \Sav{n}{\sigma} \cap \S{\lambda}$ since a standard filling of $\lambda$ may avoid $\sigma$ while the word representation contains $\sigma$ as a permutation.
Next, observe that the definition for pattern containment in a filling of a rectangular board is equivalent to pattern containment in the word representation.  Therefore for any Young board $\lambda$ we see that $\SPav{\lambda}{\sigma} \supseteq \SPav{\lambda^*}{\sigma} \cap \SP{\lambda}$ where $\lambda^*$ is the rectangular board with height $\height{\lambda}$ and width $\width{\lambda}$.

In order to proceed, we must refine our sets of fillings of $\lambda$ according to the empty rows and columns.  For a set $C \subseteq [\width{\lambda}]$ of columns and $R\subseteq [\height{\lambda}]$ of rows,  let $\SPH{\lambda}{C,R}$ be the set of fillings such that column $c$ is empty if and only if $c\in C$ and row $r$ is empty if and only if $r\in R$.  Furthermore, let $\SPHav{\lambda}{C,R}{\sigma}$ be those fillings in $\SPH{\lambda}{C,R}$ which avoid the vincular pattern $\sigma$, i.e., $\SPHav{\lambda}{C,R}{\sigma} := \SPH{\lambda}{C,R} \cap \SPav{\lambda}{\sigma}$.

We are now ready for the central definitions of this paper.  The first definition is standard, while the second definition is a natural extension to fillings.
\begin{definition}
 Two patterns $\sigma$ and $\tau$ are called \emph{shape-Wilf-equivalent} if $\sav{\lambda}{\sigma}=\sav{\lambda}{\tau}$ for all Young boards $\lambda$; we denote this $\sigma\swe\tau$.

  Two patterns $\sigma$ and $\tau$ are called \emph{filling-shape-Wilf-equivalent} if $\sphav{\lambda}{C,R}{\sigma}=\sphav{\lambda}{C,R}{\tau}$ for all Young boards $\lambda$ and sets $C\subseteq [\width{\lambda}]$ and $R\subseteq [\height{\lambda}]$; we denote this $\sigma\spswe\tau$.
\end{definition}

Clearly shape-Wilf-equivalence implies Wilf-equivalence since $\Sav{n}{\sigma}$ is the same as the set of standard fillings of the $n\times n$ board which avoid $\sigma$.  Also filling-shape-Wilf-equivalence implies shape-Wilf-equivalence, since $\SPHav{\lambda}{\emptyset, \emptyset}{\sigma} = \Sav{\lambda}{\sigma}$.   Note that shape-Wilf-equivalence is such a strong condition that neither $\sigma \swe \sigma^r$ nor $\sigma \swe \sigma^{c}$ is necessarily true.

For a Young board $\lambda$, $C\subseteq \width{\lambda}$, and $R\subseteq \height{\lambda}$, let $\lambda^{(C,R)}$ be the Young board formed by deleting from $\lambda$ each column in $C$ and each row in $R$.  There is a natural bijection $\rho_{\lambda}^{C,R}: \SPH{\lambda}{C,R} \to \S{\lambda^{(C,R)}}$, which maps any filling $\pi$ of $\lambda$ in $\SPH{\lambda}{C,R}$ to a standard filling of $\lambda^{(C,R)}$ such that the numerical letters of the filling $\pi$ are order-isomorphic to the letters of the standard filling $\rho_{\lambda}^{C,R}(\pi)$.  For example, let $\lambda=(6,6,6,6,4)$, $C=\{1,3,5\}$ and $R=\{1,4,6\}$.  Then $\n3\n5\n2 \in \SPH{\lambda}{C,R}$, and $\rho_{\lambda}^{C,R}(\pi) = 231$ is a standard filling of $\lambda^{(C,R)} = (3,3,2)$.

For classical patterns, empty rows and empty columns have no effect on pattern containment since no adjacencies are required.  This can be phrased as the following proposition.
\begin{proposition}\label{prop:emptyrowsandcolumns}
 For a classical pattern $\sigma$, Young board $\lambda$, and subsets $R\subseteq [\height{\lambda}]$, and $C\subseteq [\width{\lambda}]$, $\rho_{\lambda}^{(C,R)}$ provides a bijection from $\SPHav{\lambda}{C,R}{\sigma}$ to $\Sav{\lambda^{(C,R)}}{\sigma}$.
\end{proposition}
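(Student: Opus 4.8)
The plan is to show that the already-constructed bijection $\rho := \rho_{\lambda}^{C,R} \colon \SPH{\lambda}{C,R} \to \S{\lambda^{(C,R)}}$ restricts to a bijection between the $\sigma$-avoiding subsets. Since $\SPHav{\lambda}{C,R}{\sigma}\subseteq\SPH{\lambda}{C,R}$ and $\Sav{\lambda^{(C,R)}}{\sigma}\subseteq\S{\lambda^{(C,R)}}$, it suffices to prove that for every $\pi\in\SPH{\lambda}{C,R}$ the filling $\pi$ contains $\sigma$ if and only if $\rho(\pi)$ contains $\sigma$; the bijection then cuts down to the complementary (avoiding) subsets.

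First I would dispose of the spacers using the hypothesis that $\sigma$ is classical. A copy of a classical $\sigma\in\S{k}$ is governed purely by order-isomorphism, with no adjacency requirement, and all entries of $\sigma$ are mutually comparable numbers; since $\n$ is incomparable to every element of $\mathbb{P}$, no spacer can occur in a copy of $\sigma$. Hence every copy of $\sigma$ in $\pi$ is carried by the numerical letters of $\pi$. By construction $\rho$ maps the numerical letters of $\pi$ order-isomorphically onto the (entirely numerical) letters of the standard filling $\rho(\pi)$, so subsequences of $\pi$ order-isomorphic to $\sigma$ correspond bijectively to subsequences of $\rho(\pi)$ order-isomorphic to $\sigma$.

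It remains to check that this correspondence respects the board constraint, which I expect to be the main obstacle. Write $\mu=\lambda^{(C,R)}$, and let $\phi(c)=|\{c'\notin C : c'\leq c\}|$ and $\psi(r)=|\{r'\notin R : r'\leq r\}|$ be the order-preserving relabelings of the surviving columns and rows. The crux is the claim that for $c\notin C$ and $r\notin R$ one has $(c,r)\in\lambda$ if and only if $(\phi(c),\psi(r))\in\mu$; that is, deleting the rows in $R$ and columns in $C$ preserves membership of every surviving cell. I would prove this by computing column heights directly, noting $\mu_{\phi(c)}=|\{r'\notin R : r'\leq\lambda_c\}|$ while $\psi(r)=|\{r'\notin R : r'\leq r\}|$, and then using monotonicity to see that $r\leq\lambda_c$ is equivalent to $\psi(r)\leq\mu_{\phi(c)}$ for $r\notin R$. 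Conceptually this is just the fact that the cells of $\lambda^{(C,R)}$ are exactly the surviving cells of $\lambda$, relabeled, with the Young condition guaranteeing the result is again a Young board.

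With the claim established the argument closes quickly. A copy of $\sigma$ in $\pi$ uses only non-empty columns and rows, so its indices $i_1<\dots<i_k$ avoid $C$ and its values avoid $R$; in particular its upper-right corner $(i_k,M)$, with $M=\max\{\pi_{i_1},\dots,\pi_{i_k}\}$, satisfies $i_k\notin C$ and $M\notin R$. Since $\psi$ is monotone, $\rho$ sends this copy to the subsequence of $\rho(\pi)$ occupying columns $\phi(i_1)<\dots<\phi(i_k)$ with corner $(\phi(i_k),\psi(M))$. By the claim, $(i_k,M)\in\lambda$ exactly when $(\phi(i_k),\psi(M))\in\mu$, so the Young-board corner criterion holds for the copy in $\pi$ iff it holds for its image in $\rho(\pi)$. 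Together with the order-isomorphism correspondence this yields that $\pi$ contains $\sigma$ iff $\rho(\pi)$ does, and hence that $\rho$ carries $\SPHav{\lambda}{C,R}{\sigma}$ bijectively onto $\Sav{\lambda^{(C,R)}}{\sigma}$. The only real work is the board-membership claim; the hypothesis that $\sigma$ is classical enters precisely to let us discard the spacers, leaving only order-isomorphism and the corner condition to track.
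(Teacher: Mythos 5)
Your proof is correct and takes essentially the same approach as the paper, which states this proposition without a formal proof, justifying it in one sentence (empty rows and columns have no effect on classical-pattern containment since no adjacencies are required); your argument is a careful elaboration of exactly that idea, since spacers cannot participate in a copy of a classical pattern and the relabeling $(\phi,\psi)$ preserves both order-isomorphism and the corner condition. The board-membership claim you isolate is the only nontrivial detail, and you verify it correctly.
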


This has the corollary that shape-Wilf-equivalence and filling-shape-Wilf-equivalence are equivalent notions for classical patterns:
\begin{corollary}\label{cor:emptyrowsandcolumns}
For classical patterns $\sigma$ and $\tau$, $\sigma \swe \tau$ if and only if $\sigma \spswe \tau$.
\end{corollary}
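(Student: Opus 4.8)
The plan is to prove the two implications separately, with essentially all of the substance resting on Proposition~\ref{prop:emptyrowsandcolumns}. The forward direction, that $\sigma \spswe \tau$ implies $\sigma \swe \tau$, requires no new argument: it was already observed in the discussion following the central definitions that filling-shape-Wilf-equivalence implies shape-Wilf-equivalence for \emph{arbitrary} patterns, since $\SPHav{\lambda}{\emptyset, \emptyset}{\sigma} = \Sav{\lambda}{\sigma}$. So this half is inherited immediately from the general remark and specializing $C=R=\emptyset$.

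For the reverse direction I would assume $\sigma \swe \tau$ and fix an arbitrary Young board $\lambda$ together with subsets $C \subseteq [\width{\lambda}]$ and $R \subseteq [\height{\lambda}]$. The key step is to invoke Proposition~\ref{prop:emptyrowsandcolumns} for each of the classical patterns $\sigma$ and $\tau$ in turn: the bijection $\rho_{\lambda}^{(C,R)}$ supplied there equates the number of fillings with the prescribed empty rows and columns to the number of standard fillings of the deleted board, giving
\[
\sphav{\lambda}{C,R}{\sigma} = \sav{\lambda^{(C,R)}}{\sigma} \qquad\text{and}\qquad \sphav{\lambda}{C,R}{\tau} = \sav{\lambda^{(C,R)}}{\tau}.
\]
Since $\lambda^{(C,R)}$ is itself a Young board, the hypothesis $\sigma \swe \tau$ applies to it and yields $\sav{\lambda^{(C,R)}}{\sigma} = \sav{\lambda^{(C,R)}}{\tau}$. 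Chaining these three equalities produces $\sphav{\lambda}{C,R}{\sigma} = \sphav{\lambda}{C,R}{\tau}$, and because $\lambda$, $C$, and $R$ were arbitrary this is precisely the definition of $\sigma \spswe \tau$.

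I do not anticipate a genuine obstacle, as Proposition~\ref{prop:emptyrowsandcolumns} already carries out all of the combinatorial work of stripping away the empty rows and columns without disturbing containment of a classical pattern. The only point meriting a moment's care is confirming that $\lambda^{(C,R)}$ is a legitimate Young board, so that the shape-Wilf-equivalence hypothesis may be applied to it; this is immediate from the definition of $\lambda^{(C,R)}$ as the board obtained by deleting the columns indexed by $C$ and the rows indexed by $R$, since deleting whole rows and columns preserves the weakly decreasing column-height and row-width profile that characterizes a Young board.
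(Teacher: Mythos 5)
Your proposal is correct and matches the paper's intended argument: the paper derives this corollary directly from Proposition~\ref{prop:emptyrowsandcolumns} in exactly the way you describe, applying the bijection $\rho_{\lambda}^{(C,R)}$ to both patterns and invoking shape-Wilf-equivalence on the Young board $\lambda^{(C,R)}$, with the forward direction following from the general observation that filling-shape-Wilf-equivalence implies shape-Wilf-equivalence via $C=R=\emptyset$. Your added check that $\lambda^{(C,R)}$ remains a Young board is a sensible detail the paper leaves implicit.
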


For vincular patterns we may only go halfway.  Empty rows make no difference to pattern containment for vincular patterns, because the values of the letters forming the copy of the pattern do not matter except insofar as their relation to one another, but empty columns affect adjacency.  Therefore we have the following proposition.
\begin{proposition}\label{prop:emptyrows}
 For a vincular pattern $\sigma$, Young board $\lambda$, and subsets $R\subseteq [\height{\lambda}]$, and $C\subseteq [\width{\lambda}]$, the map deleting the rows in $R$ provides a bijection from $\SPHav{\lambda}{C,R}{\sigma}$ to $\SPHav{\lambda^{(\emptyset, R)}}{C,\emptyset}{\sigma}$.
\end{proposition}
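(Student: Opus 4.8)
The plan is to realize the row-deletion map as an explicit reduction of values and then verify in turn that it is a bijection and that it preserves $\sigma$-avoidance. Since a filling $\pi$ in $\SPH{\lambda}{C,R}$ has empty rows indexed exactly by $R$, the numerical letters of its word representation are precisely $[\height{\lambda}] \setminus R$. I would define the map $\phi$ on words by $w'_c = \n$ when $w_c = \n$ and $w'_c = w_c - |\{r \in R : r < w_c\}|$ otherwise; this is exactly the relabeling of values produced by physically deleting the empty rows. Its inverse reinserts the rows of $R$, sending each numerical letter $w'_c$ to the $(w'_c)$-th smallest element of $[\height{\lambda}] \setminus R$, so once $\phi$ is shown to land in $\SPH{\lambda^{(\emptyset,R)}}{C,\emptyset}$ the bijection is immediate.

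Well-definedness needs two observations. First, because $R$ is exactly the empty-row set, the numerical letters of $w'$ are precisely $[\,\height{\lambda} - |R|\,] = [\height{\lambda^{(\emptyset,R)}}]$ without repetition; moreover deleting rows changes no column's occupancy, so $w'$ is a spaced permutation whose empty columns are still exactly $C$ and all of whose rows are occupied. Second, I must check the board constraint $w'_c \leq \lambda^{(\emptyset,R)}_c$ for each numerical letter. Writing $r = w_c \leq \lambda_c$ with $r \notin R$, this reduces to $|\{r' \in R : r < r' \leq \lambda_c\}| \leq \lambda_c - r$, which holds simply because the interval $(r, \lambda_c]$ contains only $\lambda_c - r$ integers. (The same count shows that deleting rows from a Young board again yields a Young board, so $\lambda^{(\emptyset,R)}$ is legitimate.)

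To see that $\phi$ preserves containment of $\sigma$, note that it fixes every column index and every spacer position and relabels the numerical values monotonically. Hence for any columns $1 \leq i_1 < \cdots < i_k \leq \width{\lambda}$ the adjacency requirements $i_{j+1}-i_j = 1$ are unchanged, and since a copy of a permutation pattern can use only numerical letters (the spacer $\n$ being incomparable to every integer), order-isomorphism of the subword $\pi_{i_1}\cdots\pi_{i_k}$ to $\sigma$ survives the monotone relabeling. Thus the spaced-permutation half of the copy criterion transfers verbatim in both directions.

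The only place where the board shape genuinely intervenes is the upper-right-corner test. If $\pi_{i_1}\cdots\pi_{i_k}$ is a copy of $\sigma$ in $\pi$, its corner condition reads $\max\{\pi_{i_1},\dots,\pi_{i_k}\} \leq \lambda_{i_k}$; taking $r$ to be this maximum, which lies outside $R$ as an occupied row, and invoking the very inequality from the well-definedness check shows this is equivalent to $\max\{\phi(\pi)_{i_1},\dots,\phi(\pi)_{i_k}\} \leq \lambda^{(\emptyset,R)}_{i_k}$, the corner condition for $\phi(\pi)$. So copies of $\sigma$ in $\pi$ match copies in $\phi(\pi)$ exactly, and $\phi$ restricts to the desired bijection $\SPHav{\lambda}{C,R}{\sigma} \to \SPHav{\lambda^{(\emptyset,R)}}{C,\emptyset}{\sigma}$. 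I expect this corner computation to be the one step requiring genuine care: everything else follows from $\phi$ being a column-preserving monotone relabeling, whereas here one must confirm that deleting rows shrinks the containment window in exact step with the reduced values.
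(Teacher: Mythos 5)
Your proof is correct and takes essentially the same approach as the paper, which asserts this proposition with only the informal justification that deleting empty rows changes neither the relative order of the numerical letters nor any column adjacencies, while empty columns (unlike rows) would affect adjacency. Your explicit relabeling map and, in particular, the verification that the upper-right-corner condition transfers in both directions simply fill in the details the paper leaves implicit.
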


As an immediate corollary to Proposition \ref{prop:emptyrows} we get:

\begin{corollary}\label{cor:emptyrows}
  If  $\sigma$ and $\tau$ are vincular patterns such that $\sphav{\lambda}{C,\emptyset}{\sigma}=\sphav{\lambda}{C,\emptyset}{\tau}$ for all boards $\lambda$ and sets $C\subseteq [\width{\lambda}]$, then $\sigma \spswe \tau$.
\end{corollary}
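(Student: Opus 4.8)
The plan is to reduce the general case of an arbitrary empty-row set $R$ to the known case $R=\emptyset$ by invoking Proposition \ref{prop:emptyrows} twice. Fix a Young board $\lambda$ together with subsets $C\subseteq[\width{\lambda}]$ and $R\subseteq[\height{\lambda}]$; to establish $\sigma\spswe\tau$ it suffices to prove $\sphav{\lambda}{C,R}{\sigma}=\sphav{\lambda}{C,R}{\tau}$ for this arbitrary choice.

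First I would apply Proposition \ref{prop:emptyrows} to the pattern $\sigma$: the map that deletes the rows of $R$ is a bijection from $\SPHav{\lambda}{C,R}{\sigma}$ onto $\SPHav{\lambda^{(\emptyset,R)}}{C,\emptyset}{\sigma}$, whence $\sphav{\lambda}{C,R}{\sigma}=\sphav{\lambda^{(\emptyset,R)}}{C,\emptyset}{\sigma}$. The identical argument applied to $\tau$ yields $\sphav{\lambda}{C,R}{\tau}=\sphav{\lambda^{(\emptyset,R)}}{C,\emptyset}{\tau}$.

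The only point requiring care is that the hypothesis genuinely applies to the reduced board. Deleting rows does not change the number of columns, so $\lambda^{(\emptyset,R)}$ is again a Young board with $\width{\lambda^{(\emptyset,R)}}=\width{\lambda}$, and hence $C$ remains a valid column set, $C\subseteq[\width{\lambda^{(\emptyset,R)}}]$. Applying the hypothesis with the empty row set to the board $\lambda^{(\emptyset,R)}$ and column set $C$ then gives $\sphav{\lambda^{(\emptyset,R)}}{C,\emptyset}{\sigma}=\sphav{\lambda^{(\emptyset,R)}}{C,\emptyset}{\tau}$. Chaining the three resulting equalities produces $\sphav{\lambda}{C,R}{\sigma}=\sphav{\lambda}{C,R}{\tau}$, and since $\lambda$, $C$, and $R$ were arbitrary this is precisely the definition of $\sigma\spswe\tau$. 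I expect no substantive obstacle here: all of the combinatorial content is already carried by Proposition \ref{prop:emptyrows}, and this corollary is merely the bookkeeping step that folds the empty-row reduction into a definitional rearrangement.
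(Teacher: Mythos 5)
Your approach is the paper's own: the paper presents this corollary as immediate from Proposition \ref{prop:emptyrows}, and your three-step chain (delete the rows $R$ for $\sigma$, invoke the hypothesis on the reduced board, run Proposition \ref{prop:emptyrows} backwards for $\tau$) is exactly that intended argument. However, the one claim you isolate as ``the only point requiring care'' is false in general: deleting rows \emph{can} change the number of columns. If every cell of some column lies in a row of $R$, that column disappears from $\lambda^{(\emptyset,R)}$. Concretely, take $\lambda=(2,1)$, $R=\{1\}$, $C=\{2\}$: then $\lambda^{(\emptyset,R)}=(1)$ has width $1$, while $\SPH{\lambda}{C,R}$ consists of the single filling $2\n$, and $C=\{2\}\not\subseteq[\width{\lambda^{(\emptyset,R)}}]$, so $\SPH{\lambda^{(\emptyset,R)}}{C,\emptyset}$ is not even defined and the hypothesis cannot be invoked verbatim.

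The gap is confined to this degenerate situation and closes easily. If some vanishing column (one whose cells all lie in rows of $R$) is not in $C$, then $\SPH{\lambda}{C,R}=\emptyset$, since any filling whose rows in $R$ are empty forces that column to be empty; hence $\sphav{\lambda}{C,R}{\sigma}=0=\sphav{\lambda}{C,R}{\tau}$ and there is nothing to prove. Otherwise all vanishing columns lie in $C$, and row-deletion gives a bijection from $\SPHav{\lambda}{C,R}{\sigma}$ onto $\SPHav{\lambda^{(\emptyset,R)}}{C',\emptyset}{\sigma}$, where $C'$ is the re-indexed set of surviving columns of $C$. Since the hypothesis quantifies over \emph{all} boards and \emph{all} column subsets, it applies to the pair $(\lambda^{(\emptyset,R)},C')$, and your chain of three equalities then goes through with $C'$ in place of $C$. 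In fairness, Proposition \ref{prop:emptyrows} as stated in the paper also keeps the label $C$ on both sides and glosses over the same re-indexing, so your write-up matches the paper's level of rigor; still, the width-preservation statement as you wrote it is the one step that would fail, and it is worth patching as above.
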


\subsection{Direct sum and skew sum of vincular patterns}
Recall the \emph{direct sum} of two permutations, $\alpha\in\S{k}$ and $\beta\in\S{\ell}$, is the length-$(k+\ell)$ permutation 
\begin{equation}\label{eqn:defineoplus}
 \alpha \oplus \beta = \alpha_1\alpha_2\cdots\alpha_k(\beta_1+k)(\beta_2+k)\cdots(\beta_{\ell}+k).
\end{equation}
  This is most easily seen as placing a copy of $\beta$ above and to the right of $\alpha$ as fillings of an $(k+\ell)\times(k+\ell)$ rectangular board. See Figure \ref{fig:sum} where $312\oplus2413=312\,5746$ is illustrated.

\ifgraph
\begin{figure}
	\centering
	\includegraphics[width=.3\textwidth]{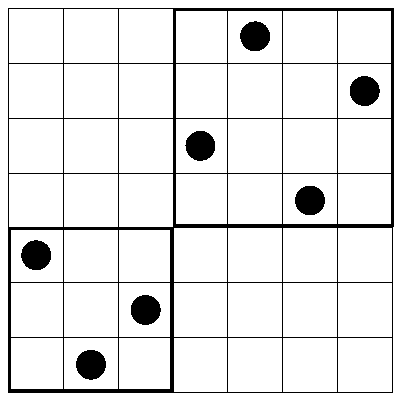} \qquad \includegraphics[width=.3\textwidth]{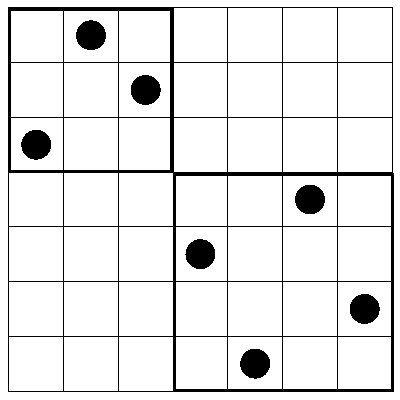}
	\caption{Illustrations of $312\oplus2413=3125746$ (left) and  $132\ominus3142=5763142$ (right).}
	\label{fig:sum}
\end{figure}
\fi

\begin{definition}
The \emph{direct sum} of two vincular patterns $(\sigma, A)\in\S{k}\times [k-1]$ and $(\tau, B)\in\S{\ell}\times [\ell-1]$ is the pattern $(\sigma,A)\oplus (\tau,B) = (\sigma\oplus \tau, C)$ where $C=A \cup \{i+k: i\in B\}$. 
\end{definition}

 For example, $(31\d2)\oplus(24\d13)=31\d2\d57\d46$.  Observe that this definition is equivalent to the classical definition when $\sigma$ and $\tau$ are classical patterns.

Similarly, the \emph{skew sum} of two permutations, $\alpha\in\S{k}$ and $\beta\in\S{\ell}$, is the length-$(k+\ell)$ permutation 
\begin{equation}\label{eqn:defineominus}
 \alpha \ominus \beta = (\alpha_1+\ell)(\alpha_2+\ell)\dotsm (\alpha_k+\ell)\beta_1\beta_2\dotsm\beta_{\ell}.
\end{equation}
 This is most easily seen as placing $\beta$ below and to the right of $\alpha$ as fillings of a rectangular board. For example, $132\ominus3142=576\,3142$ is illustrated in Figure \ref{fig:sum}.

\begin{definition}
The \emph{skew sum} of two vincular patterns $(\sigma, A)\in\S{k}\times [k-1]$ and $(\tau, B)\in\S{\ell}\times [\ell-1]$ is the pattern $(\sigma,A)\ominus (\tau,B) = (\sigma\ominus \tau, C)$ where $C=A \cup \{i+k: i\in B\}$ (as in the case for direct sum). 
\end{definition}

 For example, $(13\d2)\ominus(31\d42)=57\d6\d31\d42$.  As for direct sum, this definition is equivalent to the classical definition when $\sigma$ and $\tau$ are classical patterns.

\subsection{Summary of results}
In Section \ref{sec:SWEandWE}, we will prove the following result:

\begin{theorem*}
For vincular patterns $\alpha$, $\beta$, and $\sigma$, if $\alpha\spswe\beta$ then $\alpha\oplus\sigma \spswe \beta\oplus\sigma$.  
\end{theorem*}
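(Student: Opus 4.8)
The plan is to reduce the problem to the case of no empty rows, then to partition the relevant fillings into fibers on which avoidance of $\alpha\oplus\sigma$ is controlled by avoidance of $\alpha$ on a single canonical sub-board, so that the hypothesis $\alpha\spswe\beta$ can be applied fiber-by-fiber. By Corollary \ref{cor:emptyrows} it suffices to prove $\sphav{\lambda}{C,\emptyset}{\alpha\oplus\sigma}=\sphav{\lambda}{C,\emptyset}{\beta\oplus\sigma}$ for every Young board $\lambda$ and every $C\subseteq[\width{\lambda}]$; fix such $\lambda$ and $C$. For a filling $\pi$ I would define the sub-board
$$\lambda'(\pi)=\{(c,r)\in\lambda : \pi \text{ has a copy of } \sigma \text{ using only cells in columns }>c \text{ and rows }>r\}.$$
Writing $h(c)$ for the largest $r$ with $(c,r)\in\lambda'(\pi)$ (and $0$ if there is none), the region strictly above-and-right of $(c,r)$ only shrinks as $c$ or $r$ grows, so $h$ is weakly decreasing and $\lambda'(\pi)$ is a Young board sharing the columns of $\lambda$ (merely truncated in height).

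The first key step is the structural lemma: $\pi$ contains $\alpha\oplus\sigma$ if and only if the restriction $\pi|_{\lambda'(\pi)}$ contains $\alpha$. The point is that the junction of $\alpha\oplus\sigma$ carries no adjacency constraint, since in $(\sigma,A)\oplus(\tau,B)=(\sigma\oplus\tau,C)$ the index $k$ never lies in $C$; hence a copy of $\alpha\oplus\sigma$ is exactly a copy of $\sigma$ sitting strictly above-and-right of a copy of $\alpha$, with only the internal adjacencies of each factor required. For the forward direction, the maximal column and maximal row of the $\alpha$-part determine a corner lying in $\lambda'(\pi)$, and since $\lambda'(\pi)$ is lower-left closed the whole $\alpha$-part lies in $\lambda'(\pi)$. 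For the converse, the corner of any copy of $\alpha$ inside $\lambda'(\pi)$ has a genuine copy of $\sigma$ strictly above-and-right of it by definition of $\lambda'$, and the two assemble into a copy of $\alpha\oplus\sigma$ in $\pi$. Throughout, passing to $\lambda'(\pi)$ preserves adjacency, because $\lambda'(\pi)$ retains the same columns with the same indices as the bottom of $\lambda$, so ``consecutive columns'' means the same thing in $\pi$ and in $\pi|_{\lambda'(\pi)}$.

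The heart of the argument, and the step I expect to be the main obstacle, is to show that $\lambda'(\pi)$ depends only on the part of $\pi$ lying outside $\lambda'(\pi)$. I would derive this again from the monotonicity of $h$: any copy of $\sigma$ witnessing the value $h(c)$ sits in columns $>c$ and rows $\ge h(c)+1$, and any such cell $(c',r')$ satisfies $h(c')\le h(c)<r'$, hence $(c',r')\notin\lambda'(\pi)$; so every witnessing copy lies entirely outside $\lambda'(\pi)$. Consequently, if $\tilde\pi$ agrees with $\pi$ outside $\lambda'(\pi)$ then every witnessing copy survives, giving $\lambda'(\tilde\pi)\supseteq\lambda'(\pi)$; and a symmetric estimate shows that any copy of $\sigma$ in $\tilde\pi$ that would enlarge some $h(c)$ would likewise lie outside $\lambda'(\pi)$ and hence already occur in $\pi$, contradicting the maximality of $h(c)$. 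Thus $\lambda'(\tilde\pi)=\lambda'(\pi)$, which is exactly what lets me hold the ``outer'' data fixed while varying the filling inside the sub-board.

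With this invariance in hand I would partition $\SPH{\lambda}{C,\emptyset}$ according to the outer filling $\pi_{\mathrm{out}}=\pi|_{\lambda\setminus\lambda'(\pi)}$. The invariance makes $B:=\lambda'(\pi)$ constant on each block, and the inner fillings $\pi|_{B}$ range over precisely a set $\SPH{B}{C',R'}$, where $C'$ and $R'$ record the columns and rows of $B$ already occupied by $\pi_{\mathrm{out}}$ (together with the columns of $C$). By the structural lemma, the members of a block that avoid $\alpha\oplus\sigma$ correspond to the inner fillings avoiding $\alpha$, so the block contributes $\sphav{B}{C',R'}{\alpha}$; because $\sigma$ is unchanged, the same partition applies to $\beta\oplus\sigma$ and the same block contributes $\sphav{B}{C',R'}{\beta}$. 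Since $\alpha\spswe\beta$ gives $\sphav{B}{C',R'}{\alpha}=\sphav{B}{C',R'}{\beta}$ on each block, summing over all blocks yields $\sphav{\lambda}{C,\emptyset}{\alpha\oplus\sigma}=\sphav{\lambda}{C,\emptyset}{\beta\oplus\sigma}$, and Corollary \ref{cor:emptyrows} then gives $\alpha\oplus\sigma\spswe\beta\oplus\sigma$.
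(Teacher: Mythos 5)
Your proposal is correct and takes essentially the same approach as the paper: your $\lambda'(\pi)$ is exactly the paper's white region (cells strictly below and to the left of a copy of $\sigma$), your structural lemma is the paper's observation that $\pi$ avoids $\alpha\oplus\sigma$ precisely when the restriction to this region avoids $\alpha$, and your fiber-by-fiber count is just the counting version of the paper's bijection, which applies a bijection $f_{\bar{\lambda}}^{\bar{C},\bar{R}}$ inside the white cells while holding the gray cells fixed. The invariance lemma you prove explicitly (that $\lambda'(\pi)$ depends only on the filling outside $\lambda'(\pi)$, via monotonicity of the height function) is precisely the fact the paper invokes, more tersely, to justify that its map is invertible.
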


This theorem generalizes the celebrated result by Backelin, West, and Xin \cite{Backelin2007}, which says that if $\alpha$, $\beta$, and $\sigma$ are \emph{classical} patterns, then $\alpha \swe \beta$ implies $\alpha\oplus\sigma \swe \beta\oplus\sigma$.

In Section 3, we will prove two new filling-shape-Wilf-equivalences.  In Subsection \ref{sec:oplussigma}  we will prove:
\begin{theorem*}
  Let $\alpha$, $\beta$, and $\sigma$ be nonempty consecutive patterns.  Then $\alpha\we\beta$ implies that $\alpha\oplus\sigma \spswe \beta\oplus\sigma$.
\end{theorem*}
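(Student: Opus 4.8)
The plan is to reduce to the case of no empty rows and then localize the role of $\alpha$ to full rectangular subboards, where ordinary Wilf-equivalence is enough. By Corollary~\ref{cor:emptyrows} it suffices to prove $\sphav{\lambda}{C,\emptyset}{\alpha\oplus\sigma}=\sphav{\lambda}{C,\emptyset}{\beta\oplus\sigma}$ for every Young board $\lambda$ and every $C\subseteq[\width{\lambda}]$. Write $a$ and $s$ for the lengths of $\alpha$ and $\sigma$. Since $\alpha$, $\beta$, and $\sigma$ are consecutive, a copy of $\gamma\oplus\sigma$ (for $\gamma\in\{\alpha,\beta\}$) consists of a consecutive $\gamma$-factor occupying $a$ adjacent nonempty columns together with a consecutive $\sigma$-factor occupying $s$ adjacent nonempty columns strictly to its right, with every $\sigma$-value exceeding every $\gamma$-value and the upper-right corner lying in $\lambda$. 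I first record the basic characterization: $\pi$ contains $\gamma\oplus\sigma$ if and only if some consecutive $\gamma$-factor is \emph{completable}, meaning that strictly to its right in position and strictly above it in value there sits a consecutive $\sigma$-factor whose cells lie in $\lambda$. Note that completability refers to $\gamma$ only through the right edge and the maximum value of the factor.

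The key geometric observation is that a completable $\gamma$-factor, together with all cells weakly below and to its left, is confined to a \emph{full rectangle} of $\lambda$. Indeed, if a $\gamma$-factor has right edge in column $e$ and maximum value $M$ and is completed by a $\sigma$-factor, then that $\sigma$-factor contains a board cell $(c_\sigma,r_\sigma)$ with $c_\sigma>e$ and $r_\sigma>M$; since $\lambda$ is a Young board, the entire rectangle $[1,e]\times[1,M]$ lies in $\lambda$. This is precisely the feature that is absent when $\gamma\oplus\sigma$ is replaced by a bare consecutive pattern: the Young boundary that obstructs shape-Wilf-equivalence of, say, consecutive $12$ and $21$ never meets a completable factor, because a nonempty $\sigma$ pushes every relevant $\gamma$-factor into the rectangular interior. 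Thus on the region where $\gamma$ can actually create an occurrence, consecutive-pattern containment coincides with word containment and the shape of $\lambda$ plays no role.

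Exploiting this, I would establish the two counts through a \emph{$\sigma$-scaffold} decomposition (equivalently a bijection that fixes the scaffold and permutes the complementary data). Let $\tau$ be the non-increasing threshold function where $\tau(c)$ is the largest value $v$ such that some consecutive $\sigma$-factor lies in columns greater than $c$ using only values greater than $v$; this is determined by the $\sigma$-factors of $\pi$ and cuts out a lower-left staircase of cells in which every $\gamma$-factor is completable. Avoidance of $\gamma\oplus\sigma$ is then equivalent to the filling avoiding consecutive $\gamma$ inside this suppressed region while behaving arbitrarily outside it. Because the suppressed region is a union of full rectangles and consecutive factors cannot cross the empty columns of $C$, it breaks into maximal runs of adjacent nonempty columns; fixing the scaffold (the $\sigma$-factors, the staircase $\tau$, the partition into runs, and the value set occupying each run) leaves one free to order the values within each run independently. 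The number of consecutive-$\gamma$-avoiding orderings of a run of length $\ell$ on a prescribed value set is $\sav{\ell}{\gamma}$, so summing over scaffolds gives $\sphav{\lambda}{C,\emptyset}{\gamma\oplus\sigma}=\sum_{\text{scaffolds}}\prod_i \sav{\ell_i}{\gamma}$. Since $\alpha\we\beta$ yields $\sav{\ell}{\alpha}=\sav{\ell}{\beta}$ for all $\ell$, and the scaffolds and the multiset of run-lengths $\{\ell_i\}$ are defined without reference to $\gamma$, the two sums agree term by term.

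The main obstacle is making this decomposition genuinely canonical and independent of the choice $\gamma\in\{\alpha,\beta\}$. The difficulty is the self-referential nature of the scaffold: the threshold $\tau$, and hence the suppressed region, are defined via the $\sigma$-factors of $\pi$, yet reordering the values inside a run must be shown neither to destroy nor to create a $\sigma$-factor, and neither to alter $\tau$ nor to change which factors are completable. Because $\sigma$-factors use values strictly above the suppressed region and occupy columns governed by the scaffold, I expect reorderings confined to a run to be inert on the $\sigma$-structure; nonetheless, verifying this rigorously---together with the delicate interaction between the value-threshold staircase (which makes completability depend on a factor's right edge, not merely on the cells it occupies) and the partition into rectangular runs, and the careful treatment of the empty columns $C$ and of the corner-in-board condition---is where the real work lies. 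Establishing that the suppressed region decomposes into independent rectangular runs, each governed by plain Wilf-equivalence rather than the strictly stronger shape-Wilf-equivalence, is the crux on which the whole argument turns.
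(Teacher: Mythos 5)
Your plan is, in substance, the paper's own proof: your threshold function $\tau$ is exactly the paper's staircase of ``red cells'' (one for each right-to-left maximal copy of $\sigma$, marking its starting column and minimum value), your suppressed region is the paper's white subboard $\bar{\lambda}$, your runs are the rectangles $\mu_i$ into which the red columns dissect it (refined at empty columns, which the paper's Lemma \ref{lem:consrow} handles internally), and replacing a run-by-run bijection with the count $\prod_i \sav{\ell_i}{\gamma}$ is a cosmetic difference. The problem is that the proposal stops exactly where the proof has to happen, and you say so yourself. Nothing in your write-up establishes that the scaffold is an invariant of the fillings being counted, i.e.\ that reordering values inside a run leaves $\tau$, the suppressed region, and the run structure unchanged, and that every reassembled filling has no completable $\gamma$-factor. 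Worse, the one claim you offer in that direction --- that ``$\sigma$-factors use values strictly above the suppressed region'' --- is false as a statement about all $\sigma$-factors: reordering inside a run can perfectly well create or destroy consecutive copies of $\sigma$ lying inside the suppressed region (for $\sigma=12$, any reordering of a run changes its ascents). It is true only of the right-to-left maximal copies, which is precisely what must be proven to suffice. Consequently your stated desideratum, that reorderings ``neither destroy nor create a $\sigma$-factor,'' is both unattainable in general and stronger than what is needed.

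The missing idea, which the paper's notion of right-to-left maximal copies supplies, is a domination argument: if a copy of $\sigma$ has any letter in a suppressed cell, say in column $c$ at value $v\leq\tau(c)$, then by the definition of $\tau$ there is a copy of $\sigma$ lying entirely in columns greater than $c$ --- hence entirely in unsuppressed, untouched cells --- whose minimum exceeds $v$ and therefore exceeds the minimum of the given copy. From this one gets both facts you need: (a) copies of $\sigma$ meeting the suppressed region never attain the maximum defining $\tau(c')$ for any $c'$, so $\tau$ is computed from untouched cells only and is invariant under your reorderings; and (b) whenever such a copy completes a $\gamma$-factor, the dominating untouched copy completes it as well, so every completable $\gamma$-factor in a reassembled filling is completed by a scaffold copy and hence lies inside a single run, where $\gamma$ is avoided by construction. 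With this lemma the scaffold decomposition is well defined on both sides and the identity $\sphav{\lambda}{C,\emptyset}{\gamma\oplus\sigma}=\sum_{\text{scaffolds}}\prod_i \sav{\ell_i}{\gamma}$ follows, giving the theorem since $\alpha\we\beta$ makes the summands agree; without it, the sum is unjustified because the scaffold of a filling on the $\beta$ side has not been shown to coincide with the scaffold it was assembled from.
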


In Subsection \ref{sec:1-23swe3-12} we will prove:
\begin{theorem*}
 For any consecutive pattern $\sigma$, $1\oplus\sigma \spswe 1\ominus\sigma$.
\end{theorem*}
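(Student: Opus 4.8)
The plan is to prove $1\oplus\sigma \spswe 1\ominus\sigma$ for any consecutive pattern $\sigma$ by constructing, for each Young board $\lambda$ and each column-set $C$, an explicit bijection between the $\n$-free... wait, between $\SPHav{\lambda}{C,\emptyset}{1\oplus\sigma}$ and $\SPHav{\lambda}{C,\emptyset}{1\ominus\sigma}$; by Corollary \ref{cor:emptyrows} it suffices to handle empty columns only (empty rows are handled automatically). Let me think about the structure of the two patterns. Here $\sigma$ is consecutive of length $k$, so $1\oplus\sigma$ is the pattern whose first letter is a lone bottom element, unattached (the dash after the leading $1$), followed by a consecutive copy of $\sigma$ shifted up by one; and $1\ominus\sigma$ is the pattern whose first letter is a lone \emph{top} element, again unattached, followed by a consecutive copy of $\sigma$ at the bottom. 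So a copy of either pattern in a filling is: a single rook at position $(a,\text{val})$ together with a consecutive (adjacent-columns) occurrence of $\sigma$ somewhere strictly to the right, where for $1\oplus\sigma$ the lone rook sits strictly \emph{below} every rook of the $\sigma$-block, and for $1\ominus\sigma$ it sits strictly \emph{above} every rook of the $\sigma$-block.

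First I would reformulate avoidance. A filling avoids $1\oplus\sigma$ iff: for every consecutive occurrence of $\sigma$ in the filling (a maximal... no, any adjacent-column block order-isomorphic to $\sigma$, whose cells all lie in $\lambda$), there is no rook strictly to its left that is strictly lower than the block's minimum. Equivalently, letting $m_{\sigma}$ denote the minimum value taken among the rooks of that $\sigma$-occurrence and $c_{\text{left}}$ its leftmost column, the filling must have no rook in any column $<c_{\text{left}}$ with value $<m_{\sigma}$. Dually, avoidance of $1\ominus\sigma$ forbids a rook to the left of an $\sigma$-occurrence that is strictly \emph{higher} than the block's maximum. The idea is that the two conditions differ only by a top-to-bottom symmetry \emph{applied within the relevant region}, and I would try to realize this symmetry as an involution on fillings that preserves the board, the empty columns, and the occurrences of the consecutive block $\sigma$ itself, while swapping "lone rook below" with "lone rook above."

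The key step is to build this involution. The natural candidate is a vertical complementation that fixes the $\sigma$-structure. Concretely, I would process the filling column by column and complement values in a controlled way: since $\sigma$ is consecutive, its occurrences depend only on the relative order of rooks in adjacent columns, so I want a value-reversing map that is an order-isomorphism on each consecutive block but globally flips above/below for the lone rook. The cleanest approach is likely to use the machinery the paper has already set up for empty columns: work with the bijection $\rho_{\lambda}^{C,\emptyset}$ to pass to standard fillings of a reduced board, where every column is occupied, then exhibit the involution there and transport it back. On a standard filling, reading the rook-values as a permutation, full complementation $w \mapsto w^c$ sends copies of $1\oplus\sigma$ to copies of $1\ominus(\sigma^c)$ and reverses the board's shape; since $\sigma^c$ is again consecutive and this is only an equality of counts, not of patterns, I would instead seek a \emph{shape-preserving} involution, which is the crux.

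The hard part will be producing an involution that preserves the Young board $\lambda$ (and the empty-column set $C$) while turning $1\oplus\sigma$-avoidance into $1\ominus\sigma$-avoidance, because naive complementation reverses the staircase and does not respect $\lambda$. I expect the resolution to be a local surgery: given a filling, identify the collection of "dangerous" rows near the bottom versus the top relative to each consecutive $\sigma$-block, and perform a height-bounded complement only on the sub-rectangle of $\lambda$ that lies below the relevant $\sigma$-occurrences, using the fact that in a Young board the available cells below a given row form themselves a Young (indeed rectangular in each column) region, so a local vertical flip stays inside $\lambda$. Making this well-defined simultaneously for all $\sigma$-occurrences — and checking it is an involution sending empty columns to empty columns and exactly interchanging the two avoidance conditions — is where all the real work lies; everything before it (reductions via Corollary \ref{cor:emptyrows} and Proposition \ref{prop:emptyrows}, and the reformulation of the two patterns) is routine.
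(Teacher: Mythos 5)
Your setup is fine: the reduction to empty columns via Proposition~\ref{prop:emptyrows} and Corollary~\ref{cor:emptyrows}, and the reformulation of the two avoidance conditions (a lone rook to the left and strictly below the minimum of a consecutive $\sigma$-block, versus strictly above its maximum, with the extra board-membership requirement in the second case) are correct and match what is needed. But the proof stops exactly where it must begin: you never construct the bijection, and you say so yourself (``where all the real work lies''). Worse, the candidate you sketch --- an involution given by vertical complementation restricted to sub-rectangles determined by the $\sigma$-occurrences --- runs into a structural obstruction: flipping the values in any region changes which consecutive patterns occur inside it (a vertical flip turns copies of $\tau$ into copies of $\tau^c$, and $\sigma \neq \sigma^c$ in general), so the collection of $\sigma$-occurrences that determines \emph{which} region you flip is itself altered by the flip. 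Nothing in the proposal indicates how to break this circularity, nor why the resulting map would be injective or would exactly exchange the two avoidance classes.

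The paper's construction is genuinely different and avoids complementation entirely. It defines a recursive bijection $\zeta_\lambda$ on standard fillings, keyed to the column $j$ containing the value $1$: move that rook to the top of its column (set $\pi^*_j = \lambda_j$), slide every rook to its right down by one row (so the tail is preserved up to order-isomorphism), and recurse on the prefix $\pi_1\dotsm\pi_{j-1}$ after deleting its empty rows; this is then extended to $\SPH{\lambda}{C,R}$ by conjugating with $\rho_{\lambda}^{C,R}$. Avoidance transfers by induction on the width: since $\pi$ avoids $1\oplus\sigma$, the tail after column $j$ contains no consecutive copy of $\sigma$, hence neither does the image tail; and the relocated rook sits at the very top of its column, so it can never play the role of the lone ``$1$'' in a copy of $1\ominus\sigma$, because the corner cell that such a copy would require lies outside $\lambda$. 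Note also that the paper's map is a bijection but not an involution, so your insistence on an involution is an unnecessary constraint; what the problem really forces is a map preserving relative order within all pairs of adjacent columns, and that requirement is precisely what leads to the recursive, minimum-based construction rather than any kind of local complementation.
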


Subsections \ref{sec:oplussigma} and \ref{sec:1-23swe3-12} also list the new Wilf-equivalences for vincular patterns of length 4 and 5.  Conjectures are collected in Section \ref{sec:Conclusion}.

\section{Shape-Wilf-equivalence and Wilf-equivalence}\label{sec:SWEandWE}

In \cite{Backelin2007} Backelin, West, and Xin prove the following proposition for classical patterns $\alpha$, $\beta$, and $\sigma$.  

\begin{proposition*}[Backelin, West, Xin \cite{Backelin2007}]
For classical patterns $\alpha$, $\beta$, and $\sigma$, if $\alpha\swe\beta$ then $\alpha\oplus\sigma \swe \beta\oplus\sigma$.  
\end{proposition*}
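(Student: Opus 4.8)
The plan is to prove the proposition one board at a time: for every Young board $\lambda$ I will build a bijection between the standard fillings of $\lambda$ avoiding $\alpha\oplus\sigma$ and those avoiding $\beta\oplus\sigma$, which gives $\sav{\lambda}{\alpha\oplus\sigma}=\sav{\lambda}{\beta\oplus\sigma}$ and hence $\alpha\oplus\sigma\swe\beta\oplus\sigma$. The guiding heuristic is that an occurrence of $\alpha\oplus\sigma$ splits a filling into a copy of $\sigma$ in the upper-right and a copy of $\alpha$ lying strictly to its lower-left. So I would like to freeze the ``$\sigma$-part'' of a filling, recognize that the remaining rooks occupy a Young sub-board $\mu\subseteq\lambda$ on which avoiding $\alpha\oplus\sigma$ amounts to avoiding $\alpha$, and then transport the filling of $\mu$ across the hypothesized equivalence $\alpha\swe\beta$. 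Because $\alpha$ and $\beta$ are classical, Corollary~\ref{cor:emptyrowsandcolumns} lets me apply the shape-Wilf bijection, say $\Phi$, even to \emph{partial} fillings of $\mu$, which is exactly what is needed here, since the frozen rooks leave some rows and columns of $\mu$ empty.

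Concretely, the first attempt is the following decomposition. Call a cell $(c,r)\in\lambda$ \emph{$\sigma$-dominated} if the rooks lying strictly to its upper-right contain a copy of $\sigma$, and let $\mu$ be the set of $\sigma$-dominated cells. One checks at once that $\mu$ is again a Young board, since the region strictly to the upper-right of $(c',r')$ contains that of $(c,r)$ whenever $(c',r')$ lies weakly below and left of $(c,r)$; and that if a filling contains $\alpha\oplus\sigma$ then the copy of $\sigma$ is strictly to the upper-right of \emph{every} rook of the $\alpha$-part, so each such rook is $\sigma$-dominated and the restriction to $\mu$ contains $\alpha$. The intended map then fixes every rook outside $\mu$ (the ``scaffold'') and applies $\Phi$ to the rooks inside $\mu$, keeping the empty rows and columns of $\mu$ fixed via Corollary~\ref{cor:emptyrowsandcolumns}. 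Invertibility would follow from checking that $\mu$ and the scaffold depend only on the scaffold: since every rook of $\mu$ lies strictly below-left of a scaffold rook, refilling $\mu$ changes neither which cells are $\sigma$-dominated nor the scaffold itself.

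The main obstacle is that the converse of the displayed implication fails as soon as $\alpha$ has two or more points: a filling whose restriction to $\mu$ contains $\alpha$ need not contain $\alpha\oplus\sigma$, because the several rooks of the $\alpha$-copy may be $\sigma$-dominated by \emph{different} copies of $\sigma$ rather than by a single copy sitting entirely to their joint upper-right. For example, with $\alpha=21$ and $\sigma=1$ the permutation $3412$ avoids $\alpha\oplus\sigma=213$, yet both rooks of its $21$-pattern are $\sigma$-dominated. Thus $\alpha\oplus\sigma$-avoidance is genuinely not localized to a single Young sub-board, and the naive map above does not respect the avoidance class. I expect this to be the crux of the proof.

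To overcome it I would replace the single region $\mu$ by a canonical \emph{greedy} copy of $\sigma$, chosen by scanning so that its supporting rooks are pushed as far to the lower-left as possible, and induct on the board by peeling off one rook of this canonical copy at a time. At each step the removed rook lies strictly to the upper-right of the part still in play, so $\Phi$ can be applied to the shrinking Young sub-board while the canonical copy is held fixed; the essential lemma is that the greedy $\sigma$-copy is itself stable under $\Phi$, which is what makes the whole procedure reversible. Verifying that this canonical copy is well defined, that it exactly witnesses $\alpha\oplus\sigma$-containment, and that it is invariant under the shape-Wilf bijection is where the real work lies.
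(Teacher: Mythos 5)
Your first decomposition is the right one --- it is precisely the Backelin--West--Xin argument, and it is the same coloring argument the paper adapts in the proof of Theorem~\ref{thm:spswe}: color a cell white when a copy of $\sigma$ lies strictly above and to its right, freeze the gray cells, and transport the white subfilling across the $\alpha\swe\beta$ bijection (using Corollary~\ref{cor:emptyrowsandcolumns} to handle empty rows and columns). The ``main obstacle'' that makes you abandon it is illusory, and the error is in what ``the restriction to $\mu$ contains $\alpha$'' means. The bijection $\Phi$ preserves containment \emph{as a filling of the Young board} $\mu$, and by the containment convention for boards this requires the upper-right corner cell of the $\alpha$-copy, the cell $(i_k,\max\{\pi_{i_1},\dotsc,\pi_{i_k}\})$, to lie in $\mu$, i.e.\ to be $\sigma$-dominated. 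A single copy of $\sigma$ strictly above and to the right of that corner is automatically strictly above and to the right of \emph{every} rook of the $\alpha$-copy, so containment of $\alpha$ by the white filling does localize to one copy of $\sigma$ and does yield a copy of $\alpha\oplus\sigma$ in $\pi$. Your example confirms this rather than refuting it: for $\pi=3412$, $\alpha=21$, $\sigma=1$, the white board is $\mu=(3,1,1)$, and the rooks $(1,3)$, $(3,1)$ do \emph{not} form a copy of $21$ in the filling of $\mu$ because the corner cell $(3,3)$ is gray --- consistent with $3412$ avoiding $213$. This corner-cell requirement is exactly why the statement concerns shape-Wilf-equivalence rather than plain Wilf-equivalence.

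Having discarded the correct map, your fallback is not a proof. Beyond being only a sketch (you yourself defer ``the real work''), its essential lemma --- that the greedy copy of $\sigma$ is stable under $\Phi$ --- cannot be established from the hypothesis and fails in general: $\Phi$ guarantees only that the refilled region avoids $\beta$, not that it avoids $\sigma$, so the image may contain new copies of $\sigma$ strictly below and to the left of the frozen copy; these are ``greedier,'' so the canonical copy of the image differs from the one you froze and the peeling procedure cannot be inverted. The domination coloring of the first approach is immune to exactly this failure: any white cell is witnessed by a copy of $\sigma$ consisting entirely of gray rooks (if a witnessing copy uses a white rook, replace it by a copy dominating that rook and iterate; the argument terminates since the minimum of column plus row over the copy's rooks strictly increases), and a new copy of $\sigma$ created inside the white region dominates only cells that were already white. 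Hence the coloring, and with it the gray scaffold, is determined by the unchanged gray rooks, which is what makes the map invertible. Return to your first decomposition and supply these two checks --- the corner-cell argument and the invariance of the coloring --- and the proof is complete.
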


In this section we adapt their proof to obtain the following theorem:

\begin{theorem} \label{thm:spswe}
For vincular patterns $\alpha$, $\beta$, and $\sigma$, if $\alpha\spswe\beta$ then $\alpha\oplus\sigma \spswe \beta\oplus\sigma$.  
\end{theorem}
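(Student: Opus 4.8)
The plan is to adapt the Backelin--West--Xin argument, supplying the extra bookkeeping forced by the vincular adjacencies and the empty columns. By Corollary \ref{cor:emptyrows} it suffices to prove $\sphav{\lambda}{C,\emptyset}{\alpha\oplus\sigma}=\sphav{\lambda}{C,\emptyset}{\beta\oplus\sigma}$ for every Young board $\lambda$ and every $C\subseteq[\width{\lambda}]$, so I fix such $\lambda$ and $C$ and assume there are no empty rows. The central object records where $\sigma$ can sit in the upper right: for a filling $\pi$ and a column $c$, let $W(c)$ be the largest integer $r_0$ such that some copy of $\sigma$ in $\pi$ occupies only cells $(c',r')$ with $c'>c$ and $r'>r_0$ (set $W(c)=0$ if no copy of $\sigma$ uses only columns $>c$). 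Then $r\le W(c)$ exactly when a copy of $\sigma$ lies strictly above and to the right of $(c,r)$, and since $W$ is weakly decreasing the cells $\mu(\pi):=\{(c,r)\in\lambda : r\le W(c)\}$ form a Young sub-board of $\lambda$.

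The first key step is the identification
\[
\pi\text{ contains }\alpha\oplus\sigma \iff \pi|_{\mu(\pi)}\text{ contains }\alpha,
\]
where the right side is pattern containment in the filling $\pi|_{\mu}$ of the Young board $\mu=\mu(\pi)$. This is where the vincular structure enters. Because the junction index $k$ between $\alpha$ and $\sigma$ lies in neither part of the adjacency set of $\alpha\oplus\sigma$, a copy of $\alpha\oplus\sigma$ is exactly a copy of $\alpha$ (respecting the adjacencies of $\alpha$) together with a copy of $\sigma$ (respecting those of $\sigma$) placed strictly above and to its right, with no constraint across the gap. For the forward direction, the upper-right corner $(c_0,r_0)$ of the $\alpha$-copy is then dominated by the $\sigma$-copy, and since $\lambda$ is a Young board this corner lies in $\lambda$, hence in $\mu$; the $\alpha$-copy together with its corner therefore witnesses containment of $\alpha$ in $\pi|_\mu$, recalling that containment in a filling of a Young board already demands that the corner lie in the board. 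Conversely a copy of $\alpha$ in $\pi|_\mu$ has its corner in $\mu$, hence dominated, supplying the $\sigma$ to the upper right. Throughout, adjacency depends only on column indices, and the columns of $\mu$ form a prefix of those of $\lambda$ with the spacers of $C$ retained, so the adjacencies of $\alpha$ are read identically in $\pi$ and in $\pi|_\mu$.

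The step I expect to be the main obstacle is showing that $\mu(\pi)$ depends only on the restriction of $\pi$ to $\lambda\setminus\mu(\pi)$; without this the swap below is circular. The crucial observation is that every value $W(c)$ is attained by a copy of $\sigma$ lying entirely outside $\mu$: if $Q$ is a copy in columns $>c$ maximizing its minimum row, then each of its points $(c',r')$ satisfies $r'\ge W(c)+1>W(c)\ge W(c')$, so $(c',r')\notin\mu$. Consequently, if $\pi'$ agrees with $\pi$ on $\lambda\setminus\mu(\pi)$, the optimal copies for $\pi$ survive in $\pi'$, giving $W'\ge W$, while any copy in $\pi'$ exceeding $W$ would occupy cells outside $\mu$ and hence already appear in $\pi$, giving $W'\le W$; thus $\mu(\pi')=\mu(\pi)$. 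This interchange never uses a property of $\alpha$, and since a copy and its counterpart use identical column indices, the vincular adjacencies play no role here.

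With invariance established I would partition $\SPHav{\lambda}{C,\emptyset}{\alpha\oplus\sigma}$ according to the pair $\bigl(\mu,\ \pi|_{\lambda\setminus\mu}\bigr)$. For each outside datum the induced empty-column set $C'\subseteq[\width{\mu}]$ and empty-row set $R'\subseteq[\height{\mu}]$ of the inside board are determined (a column or row of $\mu$ is empty inside precisely when its point has escaped into $\lambda\setminus\mu$ or its column lies in $C$), and by the reformulation the admissible inside fillings are exactly the members of $\SPHav{\mu}{C',R'}{\alpha}$, each of which glues to the fixed outside to return a filling in $\SPH{\lambda}{C,\emptyset}$ with the same $\mu$ avoiding $\alpha\oplus\sigma$. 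The hypothesis $\alpha\spswe\beta$ gives $\sphav{\mu}{C',R'}{\alpha}=\sphav{\mu}{C',R'}{\beta}$, and the identical decomposition applies to $\beta\oplus\sigma$ since the reformulation and the invariance lemma hold verbatim with $\beta$ in place of $\alpha$. Summing over all outside data then yields $\sphav{\lambda}{C,\emptyset}{\alpha\oplus\sigma}=\sphav{\lambda}{C,\emptyset}{\beta\oplus\sigma}$. The remaining work is routine bookkeeping: verifying that the glued filling has empty columns exactly $C$ and no empty rows, and that retaining the spacers of $C'$ keeps every adjacency of $\alpha$ (and of $\beta$) consistent between $\mu$ and $\lambda$.
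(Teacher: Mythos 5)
Your proposal is correct and takes essentially the same route as the paper: your region $\mu(\pi)$ is precisely the paper's white subboard $\bar{\lambda}$ (the cells strictly below and to the left of some copy of $\sigma$), and your swap of the inside filling using $\alpha\spswe\beta$ while freezing $\pi|_{\lambda\setminus\mu}$ is the paper's bijection built from $f_{\bar{\lambda}}^{\bar{C},\bar{R}}$. The only differences are cosmetic refinements: you count by partitioning over the outside data where the paper applies the bijection directly, and your invariance lemma (that $\mu(\pi)$ is determined by the filling outside $\mu(\pi)$, via copies of $\sigma$ attaining $W$ lying entirely outside $\mu$) is the explicit justification of the paper's one-line invertibility remark that the coloring is unchanged by the map.
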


\begin{proof}
 Let $\alpha \spswe \beta$, and for each board $\lambda$ and subset $C\subseteq [\width{\lambda}]$ and $R\subseteq [\height{\lambda}]$ let $f_{\lambda}^{C,R}:\SPHav{\lambda}{C,R}{\alpha}\to\SPHav{\lambda}{C, R}{\beta}$ be a bijection (as guaranteed by the hypothesis). We will use these bijections to construct a bijection $g_{\lambda}^{C,R}: \SPHav{\lambda}{C,R}{\alpha\oplus\sigma} \to \SPHav{\lambda}{C, R}{\beta\oplus\sigma}$ by coloring the cells of $\lambda$ either white or gray, then applying the transformation within the white cells while leaving the contents of the gray cells fixed.

Fix board $\lambda$, and sets $C\subseteq[\width{\lambda}]$ and $R\subseteq[\height{\lambda}]$.  Let $\pi\in\SPHav{\lambda}{C,R}{\alpha\oplus\sigma}$.  Color the cell $(c, r)$ of $\lambda$ white if the filling of the subboard (strictly) above and to the right of it contains $\sigma$, or gray otherwise.  See Figure \ref{fig:combined}.

\ifgraph
\begin{figure}[htb]
	\centering
		\includegraphics[width=.4\textwidth]{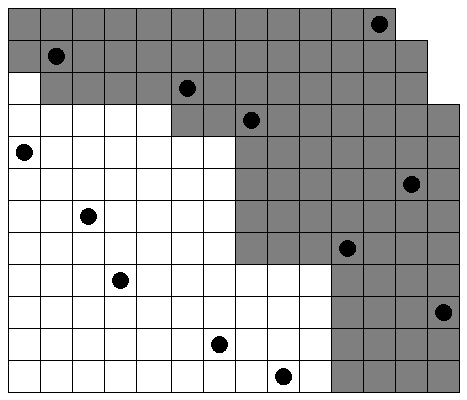}
	\caption{Coloring for filling $\pi = 8(11)64\n(10)291\n5(12)73$ for board $\lambda = (12^{12},11,9)$, where $\sigma=1$-$2$.  The white cells form subboard $\bar{\lambda} = (10, 9,9,9,9, 8,8,4,4,4)$ with filling $\bar{\pi} =8\n64\n\n2\n1\n$ .}
	\label{fig:combined}
\end{figure}
\fi

  The white cells form a Young board, denoted $\bar{\lambda}$, since if a cell is colored white then so is every cell to its left and below it.  The filling $\pi$ restricts to a filling $\bar{\pi}$ of $\bar{\lambda}$ with a set of empty columns $\bar{C}\supseteq C\cap[\width{\bar{\lambda}}]$ and a set of empty rows $\bar{R}\supseteq R\cap[\height{\bar{\lambda}}]$.  
   The filling for $\pi$ avoids $\alpha\oplus\sigma$, and all white cells of $\lambda$ lie below and to the left of a copy of $\sigma$, therefore $\bar{\pi} \in \SPHav{\bar{\lambda}}{\bar{C},\bar{R}}{\alpha}$.  Apply $f_{\bar{\lambda}}^{\bar{C}, \bar{R}}$ to $\bar{\pi}$, which will leave a filling of $\bar{\lambda}$ which avoids $\beta$ and has the same empty rows and columns as $\bar{\pi}$.  Restoring the gray cells of $\lambda$ and their contents, we are left with a filling of $\lambda$ which avoids $\beta\oplus\sigma$.  

To see that this procedure is invertible, it suffices to note that the application of the forward map only changes the contents of the white cells.  Thus when applying the inverse map, the coloring of $\lambda$ according to copies of $\sigma$ will be the same, resulting in the same subboard $\bar{\lambda}$ on which to apply the inverse transformation $\bigl(f_{\bar{\lambda}}^{\bar{C}, \bar{R}}\bigr)^{-1}$.
\end{proof}


\section{New Filling-Shape-Wilf-Equivalent Pairs}

Theorem \ref{thm:spswe} is vacuous without pairs $\alpha \spswe \beta$.  As of this writing, there is an infinite family of shape-Wilf-equivalences in \cite{Backelin2007}, and another known pair of shape-Wilf-equivalent classical patterns of length 3 in \cite{Stankova2002}.  We will begin by listing these classical results, which extend to filling-shape-Wilf-equivalence by Corollary \ref{cor:emptyrowsandcolumns}.

\begin{theorem}[Backelin, West, and Xin~\cite{Backelin2007}]\label{thm:BWX}
 Let $J_t = t\d(t-1)\d\cdots\d2\d1$.  Then  $J_t \spswe J_{t-1}\oplus 1$.
\end{theorem}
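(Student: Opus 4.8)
\emph{Proof strategy.} The plan is to strip the problem down to a purely classical shape-Wilf-equivalence. Both $J_t = t\d(t-1)\d\cdots\d2\d1$ and $J_{t-1}\oplus 1$ are written with a dash in \emph{every} position, so each is a classical pattern $(\sigma,\emptyset)$; unwinding the direct sum, $J_{t-1}\oplus 1$ is simply the classical permutation $(t-1)(t-2)\cdots 21\,t$. Because the two patterns are classical, Corollary~\ref{cor:emptyrowsandcolumns} says that the desired $J_t \spswe J_{t-1}\oplus 1$ is \emph{equivalent} to the ordinary shape-Wilf-equivalence $J_t \swe J_{t-1}\oplus 1$. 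So it suffices to produce, for each Young board $\lambda$, a bijection between the standard fillings (transversals) of $\lambda$ avoiding the full decreasing staircase $J_t$ and those avoiding the ``staircase-then-maximum'' pattern $J_{t-1}\oplus 1$; that is, to show $\sav{\lambda}{J_t}=\sav{\lambda}{J_{t-1}\oplus 1}$.

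This reduced statement is exactly the atomic shape-Wilf-equivalence of Backelin, West, and Xin, so in principle one may simply invoke \cite{Backelin2007}; it is worth noting that iterating it against the classical direct-sum-preservation proposition collapses the entire decreasing pattern, yielding $J_t \swe J_1\oplus I_{t-1}=12\cdots t$ and recovering their headline result. To prove the atomic equivalence directly I would build a bijection on transversals of $\lambda$ that \emph{relocates the maximum of a longest decreasing chain}: the two patterns agree on a decreasing run of length $t-1$ and differ only in whether its largest entry sits to the far left (as in $J_t$) or appears to the right as a fresh maximum (as in $J_{t-1}\oplus 1$). To make such a relocation rigorous I would encode each transversal by a Fomin growth diagram on the cells of $\lambda$, where the partition attached to each corner records, via Greene's theorem, the longest increasing and decreasing subsequences of the sub-region below and to its left, and then design local rules transporting the $J_t$-threshold to the $J_{t-1}\oplus 1$-threshold. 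A purely recursive alternative would peel the last column of $\lambda$ and induct on $\width{\lambda}$, using the row/column reductions recorded in Propositions~\ref{prop:emptyrowsandcolumns} and~\ref{prop:emptyrows} to pass the avoidance condition to the sub-board.

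The main obstacle is the board boundary. On the full $n\times n$ square the equivalence would reduce to routine longest-decreasing-subsequence (RSK) bookkeeping, but in a filling the presence of a copy additionally requires its bounding box $(i_k,\max_j \pi_{i_j})$ to lie inside $\lambda$. This is precisely what makes shape-Wilf-equivalence strictly stronger than Wilf-equivalence, and why naive RSK symmetry or reverse/complement arguments fail here (the text already warns that $\sigma\swe\sigma^r$ can fail). The crux of any honest proof is therefore to control the interaction between the relocation move and the staircase profile of $\lambda$: one must guarantee that the image copy's bounding box still fits and that no new long decreasing chain is created against the boundary. This boundary-sensitive bookkeeping is the technical heart of the Backelin--West--Xin argument and the step I expect to absorb essentially all of the work.
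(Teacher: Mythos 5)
Your proposal matches the paper's treatment of this statement exactly: the paper likewise observes that $J_t$ and $J_{t-1}\oplus 1$ are classical patterns, invokes Corollary~\ref{cor:emptyrowsandcolumns} to upgrade shape-Wilf-equivalence to filling-shape-Wilf-equivalence, and cites \cite{Backelin2007} for the underlying classical equivalence $J_t \swe J_{t-1}\oplus 1$ rather than reproving it. Your supplementary sketch of a direct growth-diagram/relocation argument is neither carried out by you nor needed, since your primary route (reduction plus citation) is precisely the paper's proof.
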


\begin{corollary}[Backelin, West, and Xin~\cite{Backelin2007}]\label{cor:BWX}
 Let $I_t = 1\d2\d\cdots\d(t-1)\d t$.  Then $J_t \spswe J_k \oplus I_{t-k}$ for any $0\leq k \leq t$.  In particular, $J_t \spswe I_t$.
\end{corollary}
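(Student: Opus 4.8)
The plan is to derive the corollary from Theorem~\ref{thm:BWX} by a short induction that grafts an increasing tail onto $J_t$ one letter at a time, each graft being justified by Theorem~\ref{thm:spswe}. Two preliminary remarks set this up. First, $J_t$ and $I_t$ are both classical patterns (every position carries a dash), so there is no adjacency bookkeeping to worry about; in any case Theorem~\ref{thm:spswe} is stated for arbitrary vincular patterns, so I may apply it freely. Second, since filling-shape-Wilf-equivalence is defined by an equality of cardinalities, $\spswe$ is an equivalence relation and in particular transitive, which is precisely what lets me chain together the equivalences produced at each step.

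I would then prove, by downward induction on $j$, that $J_t \spswe J_j \oplus I_{t-j}$ for every $0 \le j \le t$. The base case $j = t$ is the tautology $J_t \spswe J_t \oplus I_0 = J_t$, since $I_0$ is the empty pattern. For the inductive step, suppose $J_t \spswe J_j \oplus I_{t-j}$ for some $1 \le j \le t$. Theorem~\ref{thm:BWX} gives $J_j \spswe J_{j-1} \oplus 1$, and applying Theorem~\ref{thm:spswe} with the common summand $\sigma = I_{t-j}$ yields
\[
 J_j \oplus I_{t-j} \spswe (J_{j-1} \oplus 1) \oplus I_{t-j}.
\]
Associativity of the direct sum, together with the identity $1 \oplus I_{t-j} = I_{t-j+1}$, rewrites the right-hand side as $J_{j-1} \oplus I_{t-(j-1)}$. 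Transitivity now gives $J_t \spswe J_{j-1} \oplus I_{t-(j-1)}$, which completes the induction.

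The statement just established is exactly the corollary's claim (with $k = j$), proving $J_t \spswe J_k \oplus I_{t-k}$ for all $0 \le k \le t$; the special case $k = 0$ reads $J_t \spswe J_0 \oplus I_t = I_t$, as asserted.

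There is no serious obstacle here: once Theorems~\ref{thm:BWX} and~\ref{thm:spswe} are in hand, the argument is entirely formal. The only points requiring (routine) care are the bookkeeping identities for vincular direct sums---associativity of $\oplus$ and the fact that $1 \oplus I_m = I_{m+1}$ \emph{as vincular patterns}---which follow immediately from the definition $C = A \cup \{i+k : i \in B\}$ of the adjacency set, since every adjacency set $A, B$ occurring in these patterns is empty.
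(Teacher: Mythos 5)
Your proof is correct and follows the intended route: the paper states this corollary without its own proof (citing Backelin--West--Xin), and the expected derivation is precisely your argument of iterating Theorem~\ref{thm:BWX} and propagating each step with Theorem~\ref{thm:spswe} (common summand $I_{t-j}$), chained by transitivity of $\spswe$, with the classical identities $1\oplus I_m=I_{m+1}$ and associativity of $\oplus$ handling the bookkeeping. The only cosmetic point is that your first inductive step ($j=t$) invokes Theorem~\ref{thm:spswe} with the empty pattern $\sigma=I_0$, where it reduces to Theorem~\ref{thm:BWX} itself, so nothing is lost.
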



\begin{theorem}[Stankova and West \cite{Stankova2002}]\label{thm:SW}
 $2\d3\d1 \spswe 3\d1\d2$.
\end{theorem}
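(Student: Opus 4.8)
The first thing to observe is that, although they are written in the dashed notation, $2\d3\d1$ and $3\d1\d2$ are \emph{classical} patterns: every consecutive pair carries a dash, so the adjacency set $X$ is empty in both cases, and these are simply $231$ and $312$. Consequently the entire question lives in the classical world, and the plan is to invoke Corollary~\ref{cor:emptyrowsandcolumns}: for classical patterns filling-shape-Wilf-equivalence and shape-Wilf-equivalence coincide, so it suffices to prove the shape-Wilf-equivalence $231\swe312$ over all Young boards. I would stress at the outset that the trivial symmetries do not help. Although $312=231^{rc}$ and $312=231^{-1}$, the reverse and complement do not preserve Young shape (as the text already warns), and while transposing a board together with its filling sends a $\sigma$-avoiding standard filling of $\lambda$ to a $\sigma^{-1}$-avoiding standard filling of the conjugate board, giving $\sav{\lambda}{231}=\sav{\lambda^{\top}}{312}$, this matches \emph{different} boards and so does not by itself yield $\sav{\lambda}{231}=\sav{\lambda}{312}$. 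A genuine shape-preserving bijection is therefore needed, which is exactly the content of the Stankova--West theorem I would reproduce.

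The core of the argument is thus to construct, for each Young board $\lambda$, an explicit bijection $\Sav{\lambda}{231}\to\Sav{\lambda}{312}$. The natural framework, following the style of Backelin--West--Xin, is to build a standard filling one column at a time (from the shortest/rightmost toward the tallest/leftmost) while maintaining a bijection between partial $231$-avoiding and partial $312$-avoiding fillings. The idea is to carry along only the combinatorial state that governs where later rooks may legally be placed: a summary of which future configurations would complete a forbidden copy of the pattern. For $231$ this state records the ``large-then-small'' descents already committed (together with how low a later rook may dip while keeping its corner cell in $\lambda$), and for $312$ it records the mirrored condition. The bijection then comes from an involution identifying the $231$-states with the $312$-states under which the two column-to-column transition rules are conjugate, so that each legal placement on the $231$-side corresponds to a legal placement of matching height on the $312$-side.

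The step I expect to be the main obstacle is precisely this verification: showing that after matching states the per-column extension counts agree, i.e. that the two transfer operators are intertwined on \emph{every} Young board rather than merely on rectangles. Rectangular boards give the base intuition, since there pattern-containment in a filling reduces to containment in the word representation and one only needs the classical Wilf-equivalence of $231$ and $312$; the real difficulty is controlling how the non-rectangular boundary interacts with the state involution, checking that the weakly decreasing column heights $\pi_c\le\lambda_c$ are respected throughout, and ensuring the induction closes when a column (and its occupied row) is removed without destroying the Young shape. This boundary analysis is the heart of the Stankova--West construction and the part I would devote the most care to.
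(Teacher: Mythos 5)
Your proposal matches the paper's treatment of this statement: the paper likewise observes that $2\d3\d1$ and $3\d1\d2$ are classical patterns (namely $231$ and $312$), takes the shape-Wilf-equivalence $231\swe312$ as the cited Stankova--West theorem, and upgrades it to filling-shape-Wilf-equivalence via Corollary~\ref{cor:emptyrowsandcolumns}, exactly as you do. The paper never reproduces the Stankova--West proof itself, so your concluding column-by-column ``state'' sketch is material beyond what the paper attempts --- and, as you acknowledge, it is a plan rather than a proof --- but since the classical equivalence is legitimately invoked by citation (it is the attributed content of the theorem), this does not constitute a gap relative to the paper.
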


We now consider non-classical vincular patterns.    It can be seen by computing each $\sav{\lambda}{\sigma}$ for $\lambda=(5,5,5,5,4)$ that there is no shape-Wilf-equivalent pair (and thus no filling-shape-Wilf-equivalent pair) of consecutive patterns of length 3, as shown by Table \ref{tab:consecutive3}.

\ifgraph
\begin{table}
	\centering
		\begin{tabular}{c|c}
		            $\sigma$ & $\sav{\lambda}{\sigma}$ \\
		            \hline
                $213$ & $56$ \\
                $132$ & $57$ \\
                $231$ & $58$ \\
                $312$ & $59$ \\
                $123$ & $61$ \\
                $321$ & $65$
		\end{tabular}
	\caption{$\sav{\lambda}{\sigma}$ for consecutive patterns $\sigma\in\S{3}$ and $\lambda=(5,5,5,5,4)$.}
	\label{tab:consecutive3}
\end{table}
\fi

A computer search over standard fillings of Young boards of width at most $6$ reveals there are at most three pairs of shape-Wilf-equivalent patterns of length 3 with one dash:
\begin{equation}\label{eqn:equivs}
   12\d3 \swe 21\d 3, \qquad
   3\d12 \swe 1\d 23, \qquad
   3\d21 \swe 1\d 32.
\end{equation}
Indeed each of these holds, and furthermore each of these is a filling-shape-Wilf-equivalence with a further-reaching generalization.

\subsection{The equivalence $12\!\!-\!\!3 \spswe 21\!\!-\!\!3$}\label{sec:oplussigma}

In this subsection we will prove the following equivalence, which strengthens an equivalence from \cite{Elizalde2006, Kitaev2005}.

\begin{theorem}\label{thm:oplussigma}
 Let $\alpha$, $\beta$, and $\sigma$ be nonempty consecutive patterns.  Then $\alpha \we \beta$ implies $\alpha\oplus \sigma \spswe \beta \oplus \sigma$.
\end{theorem}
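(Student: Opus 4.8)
My plan is to run the coloring argument of Theorem~\ref{thm:spswe} with the hypothesis $\alpha\we\beta$ substituting for a genuine filling-shape-Wilf bijection, using the rigidity of consecutive patterns to tame the board constraints that the weaker hypothesis no longer supplies for free. By Corollary~\ref{cor:emptyrows} it is enough to biject $\SPHav{\lambda}{C,\emptyset}{\alpha\oplus\sigma}$ with $\SPHav{\lambda}{C,\emptyset}{\beta\oplus\sigma}$ for every Young board $\lambda$ and every $C\subseteq[\width{\lambda}]$, so empty rows may be forgotten. Writing $a$ for the length of $\alpha$, the first point to record is the rigid shape of a copy: because $\alpha$ and $\sigma$ are consecutive, the only dash of $\alpha\oplus\sigma$ sits at the junction, so a copy is a contiguous block of columns carrying $\alpha$ on small values, followed later by a contiguous block carrying $\sigma$ on strictly larger values, with the board respected exactly when the corner of the $\sigma$-block lies in $\lambda$.

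Next I would color a cell $(c,r)$ of $\lambda$ white when some valid copy of $\sigma$ lies strictly to the right of column $c$ with all of its values exceeding $r$, and gray otherwise; the white cells form a Young board $\bar\lambda$ with column heights $h_c$, and $\pi$ restricts to a filling $\bar\pi$ of $\bar\lambda$. The crucial claim is that $\pi$ avoids $\alpha\oplus\sigma$ if and only if $\bar\pi$ avoids $\alpha$ as a \emph{consecutive} pattern, with the board constraint on $\alpha$-copies rendered inert. Both directions rest on a blocking observation: if an $\alpha$-window of $\bar\pi$ occupies columns $q,\dots,q+a-1$ with largest value $r^*$, then any copy of $\sigma$ witnessing the whiteness of that window's top cell must avoid those very columns---they already carry the window's own values, all $\le r^*$---and hence lies strictly to the right of the whole window and strictly above $r^*$, exactly completing a copy of $\alpha\oplus\sigma$ while simultaneously certifying that the window's corner lies in $\bar\lambda$.

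I would then assemble the bijection as in Theorem~\ref{thm:spswe}: hold the gray cells fixed and replace $\bar\pi$ by a white filling of $\bar\lambda$, with the same empty columns, that avoids the consecutive pattern $\beta$. One verifies that altering only white cells leaves the coloring, hence $\bar\lambda$ itself, unchanged---white cells lie weakly below the heights $h_c$, whereas the copies of $\sigma$ that define those heights use values above them, so no recoloring can occur. This makes the construction invertible and reduces the whole theorem to a single equinumerosity: for each $\bar\lambda$ and empty-column set produced by the coloring, the white fillings whose word avoids consecutive $\alpha$ must be as numerous as those whose word avoids consecutive $\beta$.

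This last equinumerosity is the crux, and it \emph{cannot} be read off from $\alpha\we\beta$ on arbitrary shapes, since consecutive Wilf-equivalence is not preserved under nonrectangular boards (for instance $123\we321$ as consecutive patterns, while $\sav{\lambda}{123}\neq\sav{\lambda}{321}$ already for $\lambda=(5,5,5,5,4)$). The leverage is the self-consistency of the $\sigma$-shadow $\bar\lambda$: each height $h_c$ is one below the best value a completing copy of $\sigma$ could use to the right, which is exactly what pins the fit of the white entries. Concretely I would cut $\bar\pi$ at its spacers into maximal runs of consecutive filled columns; no consecutive copy can cross a spacer, so avoidance is decided run by run. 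When $\sigma$ is a single point one checks that each run sees a \emph{constant} ceiling---all of its values lie below the largest value to its right---so the run behaves like a rectangle, the count of white fillings factors as a multinomial distributing values among the runs times a product of run-internal consecutive-$\alpha$-avoidance numbers, and those numbers agree for $\alpha$ and $\beta$ by hypothesis. The main obstacle is extending this factorization to general consecutive $\sigma$, where a copy of $\sigma$ may use moderate values inside a run and the per-run ceiling need not be constant; here one must exploit the self-consistency of $\bar\lambda$ more delicately (I expect by induction on the size of the board, peeling the top row or the rightmost run), so that the run-internal replacement can be carried out while every entry stays under its ceiling and the empty columns and the $\sigma$-scaffolding are preserved.
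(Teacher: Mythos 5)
Your setup is sound and follows the same strategy as the paper---color the board by the ``$\sigma$-shadow,'' restrict the filling to the white region, and replace that restriction run-by-run using the hypothesis $\alpha\we\beta$---but the proposal has a genuine gap exactly where you flag it: you only complete the final equinumerosity step for $\sigma=1$, and for general consecutive $\sigma$ you offer a conjectural induction (``peeling the top row or the rightmost run'') rather than an argument. As written, the proof establishes only $\alpha\oplus 1\spswe\beta\oplus 1$ together with a correct reduction of the general theorem to the claim that the white fillings with prescribed empty columns are equinumerous for consecutive $\alpha$ and $\beta$; the stated theorem is not proved.

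The irony is that the obstacle you perceive is illusory, and the missing observation is short. Let $H(c)$ denote the white ceiling of column $c$ in your coloring, and suppose the ceiling drops, $H(c-1)>H(c)$. Then some copy of $\sigma$ witnessing $H(c-1)$ lies entirely in columns $\geq c$ but not entirely in columns $\geq c+1$, so it \emph{starts} in column $c$; its first letter is at least its minimum letter, which equals $H(c-1)+1>H(c)$, so $\pi_c$ sits strictly above the ceiling of column $c$ and column $c$ is therefore empty in $\bar\pi$. Hence for \emph{every} nonempty consecutive $\sigma$---not just $\sigma=1$---each maximal run of non-empty columns of $\bar\pi$ sees a constant ceiling, the white board dissects at its spacers into rectangles, and your run-by-run replacement (which is exactly the content of the paper's Lemma \ref{lem:consrow}) finishes the proof with no induction at all. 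This is precisely what the paper's device of right-to-left maximal copies of $\sigma$ accomplishes: marking a red cell at (starting column, minimum value) of each such copy shows that the ceilings change only at the starting columns of those copies, and the first letter of each such copy forces that column to be empty in the restriction, so the white cells between consecutive red columns form rectangles. Note also that your coloring-preservation argument already works verbatim for general $\sigma$: any copy of $\sigma$ created or destroyed inside the white region has all of its values below the ceilings, so it can never be height-defining, while the height-defining copies live entirely in gray cells and are untouched.
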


As a special case $\alpha=12$, $\beta=21$, $\sigma=1$ we get the first equivalence from \eqref{eqn:equivs}:  $12\d3 \spswe 21\d3$.

\begin{remark}
 In \cite{Elizalde2006, Kitaev2005} it is shown that if $\alpha\we\beta$ for consecutive patterns $\alpha$ and $\beta$, then $\alpha\oplus 1 \we \beta\oplus 1$.  In \cite{Elizalde2006} it is also shown that $\alpha\we\beta$ implies $\alpha\oplus12 \we \beta\oplus 12$ and $\alpha\oplus21 \we \beta\oplus 21$, as well as $12\oplus\sigma \we 21\oplus\sigma$.  These are all special cases of Theorem \ref{thm:oplussigma} above.
\end{remark}

Our approach takes several cues from the proof of Proposition 5.2 from \cite{Elizalde2006}.  First one must generalize the notion of a right-to-left maximum for a permutation.

\begin{definition}
 Let $\pi$ be a filling of Young board $\lambda$, and let $\sigma = \sigma_1 \dotsm \sigma_k$ be a consecutive pattern.  The subfactor $\pi_i \pi_{i+1} \dotsm \pi_{i+k-1}$ is a \emph{right-to-left maximal copy of $\sigma$} if the following criteria are satisfied:
 \begin{enumerate}
  \item  $\pi_i \pi_{i+1} \dotsm \pi_{i+k-1} \oi \sigma$.
  \item If $j>i$ and $\pi_j \pi_{j+1} \dotsm \pi_{j+k-1} \oi \sigma$ and $\sigma_m = \min(\sigma_1, \dotsc, \sigma_k)$, then $\pi_{j+m-1} < \pi_{i+m-1} $.  In other words, the minimal letter of $\pi_i \pi_{i+1} \dotsm \pi_{i+k-1}$ is greater than the minimal letter of any other copy of $\sigma$ to starting the right of $\pi_i$.
 \end{enumerate}
\end{definition}

For example, consider the standard filling illustrated in Figure \ref{fig:Fig5}.  The right-to-left maximal copies of $12$ start at columns $6$, $11$, $15$, and $19$.

Observe that if  $\pi_i \pi_{i+1} \dotsm \pi_{i+k-1}$ is a copy of the consecutive pattern $\sigma$ which is not right-to-left maximal, then $\pi$ has another copy of $\sigma$ with its first letter to the right of $\pi_i$ and its minimal letter greater than $\min( \pi_i, \pi_{i+1}, \dotsc, \pi_{i+k-1})$.  Therefore if $\pi$ contains a copy of $\alpha\oplus\sigma$, then $\pi$ contains a copy of $\alpha\oplus\sigma$ such that the part corresponding to $\sigma$ is a right-to-left maximal copy of $\sigma$.   Also observe that if $\sigma=1$, the right-to-left maximal copies of $\sigma$ in $\pi$ are exactly the right-to-left maxima of $\pi$.

To proceed we will make use of the following lemma regarding fillings of rectangular boards avoiding a consecutive pattern.
\begin{lemma}\label{lem:consrow}
  If $\alpha \we \beta$ for consecutive patterns $\alpha$ and $\beta$, then $\sphav{\lambda}{C, R}{\alpha} = \sphav{\lambda}{C, R}{\beta}$ for any rectangular board $\lambda$, $C\subseteq [\width{\lambda}]$, and $R\subseteq [\height{\lambda}]$.
\end{lemma}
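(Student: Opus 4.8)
The plan is to reduce both counts to the same explicit product formula, which depends on $\alpha$ and $\beta$ only through the classical avoidance numbers $\sav{m}{\alpha}$ and $\sav{m}{\beta}$; the hypothesis $\alpha\we\beta$ then finishes the job. First I would dispose of the empty rows. By Proposition \ref{prop:emptyrows}, deleting the rows in $R$ gives a bijection $\SPHav{\lambda}{C,R}{\tau}\to\SPHav{\lambda^{(\emptyset,R)}}{C,\emptyset}{\tau}$ for any vincular pattern $\tau$, and since $\lambda$ is rectangular the reduced board $\lambda^{(\emptyset,R)}$ is again rectangular, of width $\width{\lambda}$ and height $h:=\height{\lambda}-|R|$. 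Thus it suffices to prove the equality for $R=\emptyset$. If $\width{\lambda}-|C|\neq h$ then there are no fillings with exactly the columns of $C$ empty and no empty rows, so both sides vanish; hence I may assume $\width{\lambda}-|C|=h$, and every such filling places a permutation of $[h]$ in the $h$ nonspacer columns.

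Next I would exploit the special structure of rectangular boards together with consecutive patterns. As noted in the excerpt, on a rectangular board pattern containment in a filling coincides with pattern containment in its word representation. The spacers sitting in the columns of $C$ split this word into maximal runs of consecutive numerical letters; call their lengths $b_1,\dotsc,b_t$, so that $b_1+\dotsb+b_t=h$, and note that the $b_i$ are determined by $C$ alone. Because every letter of a consecutive pattern is comparable to every other while $\n$ is incomparable to all numbers, no occurrence of a consecutive pattern can use a spacer, so every occurrence lies entirely inside a single run. Consequently a filling avoids the consecutive pattern $\tau$ if and only if each of its $t$ runs, read as a word, avoids $\tau$, and whether a given run avoids $\tau$ depends only on the relative order of its entries.

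This local structure yields an exact count. To build a $\tau$-avoiding filling I first choose which $b_i$ of the values $\{1,\dotsc,h\}$ occupy each run, in $\binom{h}{b_1,\dotsc,b_t}$ ways, and then independently arrange the chosen values within each run so as to avoid $\tau$. Since consecutive-pattern avoidance depends only on relative order, the number of $\tau$-avoiding arrangements of a fixed $b_i$-element set equals $\sav{b_i}{\tau}$, the classical count. Therefore
\[
  \sphav{\lambda^{(\emptyset,R)}}{C,\emptyset}{\tau}=\binom{h}{b_1,\dotsc,b_t}\prod_{i=1}^{t}\sav{b_i}{\tau}.
\]
Applying this with $\tau=\alpha$ and $\tau=\beta$, and invoking $\alpha\we\beta$ (which gives $\sav{b_i}{\alpha}=\sav{b_i}{\beta}$ for every $i$), the two products agree, and combined with the reduction of the first paragraph the lemma follows.

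The step I expect to require the most care is the claim that consecutive-pattern occurrences cannot straddle a spacer, so that avoidance factors over the runs; this is exactly where the incomparability of $\n$ and the fully-consecutive nature of $\tau$ are both essential, and it is what makes the within-run count depend only on the run length rather than on the particular values placed there. Everything after that is the routine multinomial bookkeeping displayed above.
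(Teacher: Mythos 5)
Your proposal is correct and follows essentially the same route as the paper's proof: reduce to $R=\emptyset$ via Proposition \ref{prop:emptyrows}, pass to word representations (legitimate on rectangular boards), split the word at the spacers into maximal runs, and observe that avoidance of a consecutive pattern factors over the runs because no occurrence can straddle a spacer. The only difference is in the finish, and it is cosmetic: where you count directly via $\binom{h}{b_1,\dotsc,b_t}\prod_{i}\sav{b_i}{\tau}$, the paper applies the bijections guaranteed by $\alpha\we\beta$ within each run (preserving each run's letter set) to assemble an explicit bijection --- both are valid since the lemma asserts only an equality of cardinalities.
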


\begin{proof}
By Proposition \ref{prop:emptyrows}, it suffices to prove  $\sphav{\lambda}{C, \emptyset}{\alpha} = \sphav{\lambda}{C, \emptyset}{\beta}$ for rectangular boards $\lambda$.  Recall that pattern avoidance for any filling of a rectangular board is equivalent to pattern avoidance for its word representation.  Let $\pi$ be a spaced permutation with spacers in the positions given by $C$ and decompose $\pi$ into $w_1 \n w_2 \n \dotsm \n w_m$ for (possibly empty) words $w_i$ which do not contain spacers.  Each of these subfactors $w_i$ can be considered a filling of a rectangular board with width $|w_i|$ and height $\height{\lambda}$ with no empty columns and $\height{\lambda} - |w_i|$ empty rows.

 By the definition of pattern containment for consecutive patterns, it is clear that the spaced permutation $\pi$ avoids $\alpha$ if and only if each of the subfactors $w_i$ avoids $\alpha$.   Since $\alpha\we\beta$, there are bijections $f_{n}: \Sav{n}{\alpha}\to\Sav{n}{\beta}$ for each $n\geq 0$.  Proposition \ref{prop:emptyrows} shows these bijections extend to $f^{R}_n: \SPH{\mu_n}{\emptyset, R} \to \SPH{\mu_n}{\emptyset, R}$ where $\mu_n$ is the rectangle with height $\height{\lambda}$ and width $n$.  Apply the appropriate $f^{R}_n$ to each of the subfactors $w_i$ to get a $\beta$-avoiding word $w'_i$ with the same letters.  Thus $w_1 \n w_2 \n \dotsm \n w_m \mapsto w'_1 \n w'_2 \n \dotsm \n w'_m$ provides a bijection from  $\SPHav{\lambda}{C, \emptyset}{\alpha}$ to $\SPHav{\lambda}{C, \emptyset}{\beta}$.
\end{proof}

We now proceed to the proof of Theorem \ref{thm:oplussigma}.   We employ a strategy similar to the proof of Theorem \ref{thm:spswe}, where cells of $\lambda$ are colored and a transformation is applied within the white cells while the other cells remain fixed.

\begin{proof}
  We will construct a bijection from $\SPHav{\lambda}{C,R}{\alpha\oplus\sigma}$ to $\SPHav{\lambda}{C,R}{\beta\oplus\sigma}$, based on the bijection from Lemma \ref{lem:consrow}.  Fix a Young board $\lambda$ and a subset of columns $C\subseteq [\width{\lambda}]$, and let $\pi$ be a filling of $\lambda$  in $\SPHav{\lambda}{C,R}{\alpha\oplus\sigma}$.

Color the cells of $\lambda$ as follows (see Figure \ref{fig:Fig5} for an example):
\begin{enumerate}
 \item If $\pi_{i} \pi_{i+1} \dotsm \pi_{i+|\sigma|-1}$ is a right-to-left maximal copy of $\sigma$, then color the cell $(i, \min( \pi_{i}, \pi_{i+1}, \dotsc \pi_{i+|\sigma|-1} ) )$ red.

 \item Color white all cells below and to the left of a red cell.

 \item Color all remaining cells gray.
\end{enumerate}

\begin{figure}
 \centering
 \includegraphics[width=.3\textwidth]{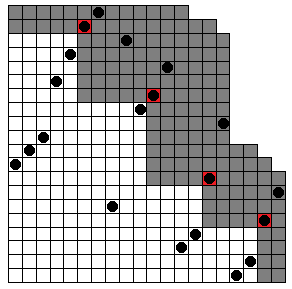}
 \caption{Coloring cells according to the proof of Theorem \ref{thm:oplussigma} for $\sigma=12$.  Red cells appear at (6, 19), (11, 14), (15, 8), (19, 5).}
  \label{fig:Fig5}
\end{figure}

By the definition of the right-to-left maximal copies of $\sigma$, no red cell lies above and to the right of any other red cell.   Thus no red cell is re-colored in Step 2.  Furthermore, the white cells form a Young board, since coloring $(c,r)$ implies $(c', r')$ is also white for any $1 \leq c' \leq c$ and $1\leq r'\leq r$.  Call this white board $\bar{\lambda}$, and observe that $\bar{\lambda}_{i-1} > \bar{\lambda}_{i}$ if and only if $(i, \lambda_{i-1}+1)$ is a red cell of $\lambda$ other than the rightmost red cell.  

We will use the red cells to dissect $\bar{\lambda}$ into rectangular subboards as follows.  Suppose that the filling $\pi$ implies red cells in columns $r_1, \dotsc, r_a$.  Then $\{r_1, \dotsc, r_a\}\subseteq \bar{C}$, since the red cells mark the column of the first letter of a copy of $\sigma$.  Let $\mu_i$ be the subboard formed by the columns $r_{i} + 1, r_i+2, \dotsc, r_{i+1}-1$ of $\bar{\lambda}$ for $2\leq i \leq a$, and let $\mu_1$ be the subboard formed by the columns $1, 2, \dotsc, r_1-1$.  Since the red cells mark columns $i$ such that $\bar{\lambda}_{i-1} \neq \bar{\lambda}_i$, we see that each of the $\mu_i$ is rectangular.

Let $\bar{\pi}^{(i)}$ be the restriction of $\pi$ to $\mu_i$, resulting in empty rows $\bar{R}^{(i)}$ and empty columns $\bar{C}^{(i)}$.  Any copy of $\alpha$ in the filling $\bar{\pi}^{(i)}$ corresponds to a copy of $\alpha$ in $\pi$ lying below and to the right of a red cell.  That cell was colored red because it is the bottom-left cell of a (right-to-left maximal) copy of $\sigma$ in $\pi$, which implies a copy of $\alpha\oplus\sigma$ in the original filling $\pi$.  Therefore $\bar{\pi}^{(i)}$ (as a filling of $\mu_i$) must avoid $\alpha$.  Lemma \ref{lem:consrow} provides a bijection for each $\bar{\pi}^{(i)}$ to a $\beta$-avoiding filling of $\mu_i$ with the same empty rows and columns.  Performing these bijections on each $\bar{\pi}^{(i)}$ and preserving the contents of the red and gray cells, we arrive at a bijection $\SPHav{\lambda}{C,R}{\alpha\oplus\sigma} \to \SPHav{\lambda}{C,R}{\beta\oplus\sigma}$.
\end{proof}

\subsubsection{Implications for Wilf-classification}

We will first restate the equivalence from \eqref{eqn:equivs} which is an immediately corollary of Theorem \ref{thm:oplussigma} and the Wilf-equivalence $12 \we 21$.

\begin{corollary}\label{cor:12-3swe21-3}
 We have the filling-shape-Wilf-equivalence $12\d3 \spswe 21\d3$.
\end{corollary}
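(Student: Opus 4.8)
The plan is to instantiate Theorem~\ref{thm:oplussigma} at the smallest nontrivial choice of parameters, namely $\alpha = 12$, $\beta = 21$, and $\sigma = 1$. Each of these is a nonempty consecutive pattern, so the theorem will apply as soon as I establish its single hypothesis, the Wilf-equivalence $\alpha \we \beta$.

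First I would record the (classical and well-known) equivalence $12 \we 21$. A permutation avoids the pattern $12$ exactly when it is strictly decreasing and avoids $21$ exactly when it is strictly increasing, so both $\Sav{n}{12}$ and $\Sav{n}{21}$ are singletons; hence $\sav{n}{12} = \sav{n}{21} = 1$ for all $n \ge 0$, and therefore $12 \we 21$.

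Next I would carry out the direct-sum bookkeeping to identify the two resulting vincular patterns. Writing $12 = (12,\{1\})$ and $1 = (1,\emptyset)$, the definition of $\oplus$ for vincular patterns gives $12 \oplus 1 = (123, \{1\}\cup\emptyset) = (123,\{1\})$, which is written $12\d3$; likewise $21 \oplus 1 = (213,\{1\})$, written $21\d3$. (The lone adjacency comes from the consecutive $\alpha$, while the length-$1$ summand $\sigma$ contributes none.) Theorem~\ref{thm:oplussigma} then upgrades $12 \we 21$ to $12\d3 \spswe 21\d3$, which is precisely the assertion.

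The hard part is essentially nonexistent: all of the real work resides in Theorem~\ref{thm:oplussigma}, and this corollary is a pure instantiation of it. The only points demanding any care are confirming that all three ingredient patterns are consecutive, checking the trivial length-$2$ equivalence, and tracking the adjacency set so that the dash correctly lands between the second and third entries of each sum.
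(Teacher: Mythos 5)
Your proof is correct and takes exactly the paper's route: the corollary is obtained as the special case $\alpha=12$, $\beta=21$, $\sigma=1$ of Theorem~\ref{thm:oplussigma} together with the trivial Wilf-equivalence $12\we 21$. Your extra bookkeeping (checking that $12\oplus 1 = 12\d3$ and $21\oplus 1 = 21\d3$ as vincular patterns, and that the length-$2$ consecutive patterns each have a unique avoider) is exactly what the paper leaves implicit.
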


Theorem \ref{thm:spswe}, combined with Theorem \ref{thm:oplussigma}, Theorem \ref{thm:BWX}, and the trivial symmetries, has several consequences for the Wilf-classification for vincular patterns of length 4 and 5 which we summarize in Corollary \ref{cor:Wilf45}.

\begin{corollary}\label{cor:Wilf45}
 We have the following Wilf-equivalences:
 \begin{enumerate}[(a)]
  \item $123\d4 \we 321 \d 4$,
  \item $213\d4 \we 231\d 4 \we 132\d4 \we 312\d4$,
  \item $12\d3\d4 \we 12\d4\d3 \we 21\d3\d4 \we 21\d4\d3$,
  \item $12\d34 \we 12\d43 \we 21\d34 \we 21\d43$,
  \item $12\d345 \we 21\d345 \we 12\d543 \we 21\d543$,
  \item $12\d435 \we 12\d453 \we 12\d534 \we 12\d354 \we 21\d435 \we 21\d453 \we 21\d534 \we 21\d354$,
  \item $123\d4\d5 \we 321 \d 4\d5$,
  \item $213\d4\d5 \we 231\d 4\d5 \we 132\d4\d5 \we 312\d4\d5$,
  \item $12\d3\d4\d5 \we12\d5\d4\d3 \we 12\d3\d5\d4 \we 21\d3\d4\d5 \we 21\d5\d4\d3 \we 21\d3\d5\d4$,
  \item $12\d4\d3\d5 \we 21\d4\d3\d5$, 
  \item $12\d5\d3\d4 \we 12\d4\d5\d3 \we 21\d5\d3\d4 \we 21\d4\d5\d3$.
 \end{enumerate}
\end{corollary}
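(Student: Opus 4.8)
The plan is to regard each pattern in a given class as a direct sum of its maximal consecutive blocks, and to pass between patterns of the class by replacing one block at a time. There are two moves. To change the \emph{first} block I apply Theorem~\ref{thm:oplussigma} when the complementary factor is itself consecutive, or Theorem~\ref{thm:spswe} when I already possess a filling-shape-Wilf-equivalence for a shorter pattern; either move produces a genuine $\spswe$. To change the \emph{last} block I conjugate by the reverse-complement: since $(\gamma\oplus\delta)^{rc}=\delta^{rc}\oplus\gamma^{rc}$ and every pattern satisfies $\sigma\we\sigma^{rc}$, a last-block change becomes a first-block change after applying $rc$, now yielding only $\we$. The inputs fed into these moves are the consecutive Wilf classes in small length ($12\we21$, $\{123,321\}$, and $\{132,213,231,312\}$), the classical shape-Wilf-equivalences $J_t\spswe J_k\oplus I_{t-k}$ of Corollary~\ref{cor:BWX} together with $2\d3\d1\spswe3\d1\d2$ of Theorem~\ref{thm:SW}, and the generating equivalence $12\d3\spswe21\d3$ of Corollary~\ref{cor:12-3swe21-3}.

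The one-dash items are immediate, since a one-dash pattern is precisely a direct sum $\alpha\oplus\sigma$ of two consecutive blocks. For (a) and (b) I take $\sigma=1$ and move the first block through its consecutive Wilf class by Theorem~\ref{thm:oplussigma}; for (d), (e), (f) I change the first block by Theorem~\ref{thm:oplussigma} and the last block by its $rc$-conjugate, using $12\we21$ for the length-$2$ blocks and the two length-$3$ consecutive classes for the length-$3$ blocks. The multi-dash items whose maximum sits in the last position are then reduced by peeling a trailing $1$: from $12\d3\d4=(12\d3)\oplus1$, $123\d4\d5=(123\d4)\oplus1$, and $213\d4\d5=(213\d4)\oplus1$, Theorem~\ref{thm:spswe} carries Corollary~\ref{cor:12-3swe21-3} and items (a), (b) up to (c), (g), (h), and then (i) follows from (c) the same way. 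Inside each such class the remaining representatives differ only in a classical tail, and these are matched by the $rc$-conjugated last-block move, which applies because the $rc$-images of the competing tails all lie in one classical shape-Wilf class: for (c) these are $I_2,J_2$, and for (i) they are $123,213,321$, both furnished by Corollary~\ref{cor:BWX}.

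The genuine obstacle is items (j) and (k). Here the first-block swap $12\leftrightarrow21$ must be performed while the classical tail ($213$ in (j); the pair $\{231,312\}$ in (k)) is \emph{not} shape-Wilf-equivalent to an increasing tail, so no representative of the class has the maximum-last form $\gamma\oplus1$ that let me invoke Corollary~\ref{cor:12-3swe21-3} for the earlier items, and conjugating by $rc$ merely turns the swap into interchanging a consecutive $12$ and $21$ adjacent to a classical factor. Because consecutive $12\not\spswe21$ --- already visible on the board $(3,3,2)$, where the two avoidance counts are $1$ and $2$ --- Theorem~\ref{thm:spswe} cannot act on the first block, while Theorem~\ref{thm:oplussigma} is unavailable since the tail is not consecutive. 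The plan for (j), (k) is therefore to supply the two missing base equivalences separately: for (j) one wants $12\d4\d3\spswe21\d4\d3$, after which $\oplus1$ and Theorem~\ref{thm:spswe} finish, and for (k) one wants the corresponding length-$5$ statement directly. I expect this to be the hardest part: these two equivalences lie outside the reach of the cited theorems, so the natural attempt is a coloring argument in the spirit of Theorem~\ref{thm:oplussigma}, with occurrences of the classical tail playing the role of the right-to-left maximal copies of $\sigma$, and the crux is to verify that such a coloring really carves out sub-boards on which $12$ and $21$ become interchangeable --- a point that is not automatic precisely because $12\not\spswe21$ on general shapes.
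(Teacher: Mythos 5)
Your derivations of items (a)--(i) are correct, and they are precisely the argument the paper leaves implicit: the paper's entire ``proof'' of Corollary \ref{cor:Wilf45} is the single sentence asserting that it follows from Theorem \ref{thm:spswe}, Theorem \ref{thm:oplussigma}, Theorem \ref{thm:BWX} and the trivial symmetries, and your block-by-block moves (Theorem \ref{thm:oplussigma}, or a known $\spswe$ pair fed into Theorem \ref{thm:spswe}, for the first block; conjugation by $rc$ for the last block; peeling a trailing $\oplus 1$ via Corollary \ref{cor:12-3swe21-3} and Theorem \ref{thm:spswe}) are the only way those ingredients assemble. Your bookkeeping of which steps yield genuine $\spswe$ statements and which yield only $\we$ (because they pass through $rc$) is also correct, and it is exactly the point on which everything turns.

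For (j) and (k) your diagnosis is right, and the gap you found lies in the paper, not merely in your attempt. Writing $12\d4\d3\d5 = (12\d4\d3)\oplus 1$, Theorem \ref{thm:spswe} would require $12\d4\d3 \spswe 21\d4\d3$, whereas item (c) supplies only $12\d4\d3 \we 21\d4\d3$, obtained through $rc$; and the paper itself stresses, in the discussion of Conjecture \ref{con:Conj1} (where it computes $12\d3\d4 \not\spswe 12\d4\d3$ and $1\d2\d3 \not\spswe 1\d3\d2$), that a mere Wilf-equivalence cannot be substituted for the $\spswe$ hypothesis of Theorem \ref{thm:spswe}. The other decomposition $12\oplus(2\d1\d3)$ is equally blocked: Theorem \ref{thm:oplussigma} needs a consecutive tail, consecutive $(12,21)$ is not a $\spswe$ pair (your $(3,3,2)$ computation is correct), and applying $rc$ only converts the head swap into the unreachable tail swap $(1\d3\d2)\oplus 12$ versus $(1\d3\d2)\oplus 21$. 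The same obstruction separates the $12$-headed from the $21$-headed patterns in (k), whose within-group equivalences moreover need Theorem \ref{thm:SW} --- which you correctly invoke but which the paper does not even list among its ingredients. So (j) and the full class (k) are not consequences of the results cited (nor, as far as I can see, of anything proved in the paper); they are at best computationally supported claims, and your closing observation --- that the coloring proof of Theorem \ref{thm:oplussigma} cannot be imitated here precisely because $12 \not\spswe 21$ on the general Young sub-boards such a coloring carves out --- is the correct explanation of why no routine patch exists. The only inaccuracy in your account is the phrase that the tail $2\d1\d3$ in (j) ``is not shape-Wilf-equivalent to an increasing tail'': it is (by Corollary \ref{cor:BWX}), but that is irrelevant, since tail changes can only be performed after conjugating by $rc$, and $(2\d1\d3)^{rc} = 1\d3\d2$ has no known shape-Wilf partner.
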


Computing $\sav{n}{\sigma}$ for $n\leq 8$ suggests that two of the above classes, (i) and (j), should be merged:

\begin{conjecture}\label{con:Conj1}
 $12\d3\d4\d5 \we 12\d4\d3\d5$
\end{conjecture}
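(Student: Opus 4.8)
The plan is to exploit the decompositions $12\d3\d4\d5 = 12\oplus(1\d2\d3)$ and $12\d4\d3\d5 = 12\oplus(2\d1\d3)$: the two patterns share the consecutive left summand $12$ and differ only in their right summand, the classical patterns $1\d2\d3=(123,\emptyset)$ and $2\d1\d3=(213,\emptyset)$. These right summands are filling-shape-Wilf-equivalent, since by Corollary \ref{cor:BWX} both $1\d2\d3$ and $2\d1\d3$ have the form $J_k\oplus I_{3-k}$, so $1\d2\d3\spswe 2\d1\d3$. Thus the conjecture asserts that this equivalence of right summands survives summing the fixed consecutive pattern $12$ on the \emph{left}---the mirror image of Theorem \ref{thm:spswe}, in which the varying filling-shape-Wilf-equivalent pair is summed on the left and the fixed part on the right.

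First I would attempt the stronger statement $12\d3\d4\d5\spswe 12\d4\d3\d5$ by transplanting the coloring argument of Theorem \ref{thm:spswe} with the two summands interchanged. Given a filling $\pi$ of a Young board $\lambda$ avoiding $12\oplus(1\d2\d3)$, one would color a cell white when the subboard strictly below and to the left of it contains a copy of the fixed left summand $12$, and apply the bijection witnessing $1\d2\d3\spswe 2\d1\d3$ within the white region. For each adjacent ascent $\pi_i<\pi_{i+1}$ the subword to the right of position $i+1$ on values exceeding $\pi_{i+1}$ must avoid the right summand, and as in Theorem \ref{thm:oplussigma} it should suffice to control only the region-maximal such ascents.

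The main obstacle is that, once the summands are interchanged, the white region---the cells lying above and to the right of the copies of $12$---is an order filter, a skew shape, rather than the down-set (Young board) that appears in Theorem \ref{thm:spswe}. The equivalence $1\d2\d3\spswe 2\d1\d3$ supplied by Corollary \ref{cor:BWX} is available only on genuine Young boards, so it cannot be applied to these filters. The natural fix, rotating the board $180^{\circ}$ to convert each filter into a Young board, is fatal: $180^{\circ}$ rotation is the reverse-complement operation, and it sends the pair $\{1\d2\d3,\,2\d1\d3\}$ to $\{1\d2\d3,\,1\d3\d2\}$, which are \emph{not} shape-Wilf-equivalent---one checks, for instance, that $\sav{(4,4,4,3)}{123}=13\neq 12=\sav{(4,4,4,3)}{132}$. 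This is precisely why the statement remains a conjecture: were $1\d2\d3\spswe 1\d3\d2$ instead true, it would follow immediately, since $(12\oplus\tau)^{rc}=\tau^{rc}\oplus 12$ with $(1\d2\d3)^{rc}=1\d2\d3$ and $(2\d1\d3)^{rc}=1\d3\d2$, so Theorem \ref{thm:spswe} would yield $(1\d2\d3)\oplus 12\spswe(1\d3\d2)\oplus 12$ and hence, after reverse-complementing, the desired equivalence. Closing the gap therefore seems to require a genuine skew-shape version of $123\spswe 213$, or a global bijection that sidesteps the coloring altogether.

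Should the filling-shape strengthening fail---it may well hold only as ordinary Wilf-equivalence---I would fall back to the conjectured statement as stated and pursue it directly: either a length-preserving bijection on $\Sav{n}{12\d3\d4\d5}$ that reorders only the relative values lying above each adjacent-ascent top, processing the ascents in increasing order of their tops to guarantee invertibility, or a transfer-matrix computation tracking the positions and values of adjacent ascents together with the increasing- and $213$-structure above them, and checking that the two generating functions agree. In either attack Lemma \ref{lem:consrow}, which already handles the consecutive structure on a single rectangular strip, is the natural tool for the local step.
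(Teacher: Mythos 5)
You have not produced a proof here, and in this case that is the honest outcome: the statement you were given is Conjecture \ref{con:Conj1} of the paper, which the author leaves open, supported only by computation of $\sav{n}{\sigma}$ for $n\leq 8$. There is no proof in the paper to compare against. What your proposal does is reconstruct, essentially exactly, the paper's own discussion of why the statement resists the tools developed there: the paper notes that the conjecture would follow from Theorem \ref{thm:spswe} together with either $12\d3\d4 \spswe 12\d4\d3$ or $1\d2\d3 \spswe 1\d3\d2$, and then reports computations showing both of these are false (on the boards $(7,7,7,7,7,7,6)$ and $(5,5,5,5,4)$ respectively). Your route --- writing the two patterns as $12\oplus(1\d2\d3)$ and $12\oplus(2\d1\d3)$, noting $1\d2\d3\spswe 2\d1\d3$ via Corollary \ref{cor:BWX} and Corollary \ref{cor:emptyrowsandcolumns}, observing that Theorem \ref{thm:spswe} only permits the equivalent pair on the \emph{left} summand, and showing that the mirrored coloring argument produces order filters (on which the Backelin--West--Xin equivalence is unavailable), with the $180^{\circ}$ rotation converting the needed equivalence into the false $1\d2\d3 \swe 1\d3\d2$ --- is a correct diagnosis of the same obstruction, phrased bijectively rather than computationally. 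That part of your analysis is sound and matches the paper's reasoning for why this remains a conjecture.

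Two caveats. First, the fallback attacks you sketch (a value-reordering bijection organized around adjacent ascents, or a transfer-matrix comparison of generating functions) are not carried out, so your proposal establishes nothing beyond what the paper already states; the conjecture remains exactly as open as the paper leaves it, and your text should present itself as an analysis of obstructions rather than a proof attempt with a deferred ending. Second, your numerical witness is wrong: on the board $(4,4,4,3)$ one finds $\sav{(4,4,4,3)}{1\d2\d3} = \sav{(4,4,4,3)}{1\d3\d2} = 13$, since the four standard fillings $1243$, $2143$, $2413$, $3142$ contain $132$ as words but only in occurrences whose upper-right corner would be the missing cell $(4,4)$; this board therefore does not separate the two patterns, and the witness to cite is the paper's board $(5,5,5,5,4)$. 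Neither caveat changes your main conclusion --- that the conjecture cannot be settled by the summation machinery of the paper in either orientation --- but the first one is precisely the gap between a research plan and a proof.
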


Conjecture \ref{con:Conj1} appears as it could follow from either $12\d3\d4 \spswe 12\d4\d3$ or $1\d2\d3 \spswe 1\d3\d2$ and Theorem \ref{thm:spswe}.  Computation shows, however, that $\sav{\lambda}{12\d3\d4} \neq \sav{\lambda}{12\d4\d3}$ for $\lambda=(7,7,7,7,7,7,6)$, so $12\d3\d4 \not\spswe 12\d4\d3$.  Similarly, $\sav{\lambda}{1\d2\d3} \neq \sav{\lambda}{1\d3\d2}$ for $\lambda=(5,5,5,5,4)$, so $1\d2\d3 \not\spswe 1\d3\d2$.

\subsection{$1\!\!-\!\!23 \spswe 3\!\!-\!\!12$ and $1\!\!-\!\!32 \spswe 3\!\!-\!\!21$}\label{sec:1-23swe3-12}

In this section we prove the filling-shape-Wilf-equivalences $1\d23 \spswe 3\d12$ and $1\d32 \spswe 3\d21$ from  \eqref{eqn:equivs}  as corollaries of the following theorem:

\begin{theorem}\label{thm:plustominus}
  For any consecutive pattern $\sigma$, $1\oplus\sigma \spswe 1\ominus\sigma$.
\end{theorem}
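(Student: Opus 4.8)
The plan is to invoke Corollary~\ref{cor:emptyrows} to reduce the claim to showing $\sphav{\lambda}{C,\emptyset}{1\oplus\sigma}=\sphav{\lambda}{C,\emptyset}{1\ominus\sigma}$ for every Young board $\lambda$ and every column set $C\subseteq[\width{\lambda}]$, so that empty rows never need to be tracked. I would first record a clean description of the two avoidance conditions. Write $k=|\sigma|$. An occurrence of either pattern consists of a \emph{body} --- a consecutive copy of $\sigma$ occupying adjacent filled columns $b,b+1,\dots,b+k-1$ whose cells all lie in $\lambda$ --- together with an isolated rook in some column $c<b$ (the dash permits any gap, so the isolated rook may sit in any earlier column). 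For $1\oplus\sigma$ the isolated rook must lie strictly below the body's minimum value; for $1\ominus\sigma$ it must lie strictly above the body's maximum value \emph{and} its cell must sit at or below height $\lambda_{b+k-1}$. Hence $\pi$ avoids $1\oplus\sigma$ exactly when no rook lies in the region strictly below-and-left of any body, while $\pi$ avoids $1\ominus\sigma$ exactly when no rook lies in the region strictly above-and-left of any body that is also capped by the height $\lambda_{b+k-1}$. As in the proof of Theorem~\ref{thm:oplussigma}, for the $\oplus$ condition it suffices to impose this only for the right-to-left maximal copies of $\sigma$, whose marked cells form a staircase descending to the right; for the $\ominus$ condition one uses the mirror-image notion, the copies whose maximum is least among those to their right, which form an ascending staircase.

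Building on the coloring technique of Theorems~\ref{thm:spswe} and \ref{thm:oplussigma}, I would color $\lambda$ according to these extremal $\sigma$-copies and carve it into bands lying to the left of successive bodies. In a $1\oplus\sigma$-avoider the isolated-candidate rooks in each band are confined to the region on or above the relevant body-minimum; in a $1\ominus\sigma$-avoider they are confined to the region strictly below the body-maximum together with the strip above the line $y=\lambda_{b+k-1}$. The intended bijection $\SPHav{\lambda}{C,\emptyset}{1\oplus\sigma}\to\SPHav{\lambda}{C,\emptyset}{1\ominus\sigma}$ is then realized band-by-band as a vertical reflection of these confined rooks about the horizontal axis determined by the body, carrying the forbidden low-left rectangle onto the forbidden high-left rectangle while keeping every rook inside $\lambda$; the cells of the bodies themselves and the gray cells are held fixed. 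Invertibility would follow exactly as in the earlier proofs, since the reflection alters only the confined cells and so leaves the coloring --- and hence the band decomposition --- unchanged.

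The main obstacle, and the point where this argument is genuinely harder than Theorem~\ref{thm:oplussigma}, is that the structure defining the coloring is \emph{not} inert under the transformation: a rook serving as an isolated candidate for one body may itself belong to a different body, so a careless reflection can create or destroy consecutive copies of $\sigma$, and it can push cells past the staircase boundary of $\lambda$ --- this last point being precisely the shape-dependence that forces the corner condition into the definition of $1\ominus\sigma$. Making the reflection legitimate therefore requires choosing the axis and the band decomposition so that each reflected rook lands in a column strictly to the left of every body it could influence, and then verifying that the resulting map respects both the boundary of $\lambda$ and the set of consecutive $\sigma$-copies. I expect to discharge this by inducting on the number of extremal copies (equivalently, on the height of the staircase), peeling off one body at a time, and treating separately the degenerate base case $|\sigma|=1$, where $1\oplus\sigma=12$ and $1\ominus\sigma=21$ and the reflection specializes to the standard transposition-type bijection on staircase fillings.
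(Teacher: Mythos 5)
Your setup is sound: the reduction to $R=\emptyset$ via Corollary~\ref{cor:emptyrows}, and the characterization of occurrences of $1\oplus\sigma$ and $1\ominus\sigma$ (including the board-boundary cap $v\leq\lambda_{b+k-1}$ on the isolated rook in the $\ominus$ case) match the facts the paper's argument relies on. But the proposal has a genuine gap, and it is exactly the one you flag yourself: the band-by-band ``vertical reflection'' is never constructed, and the difficulty you name is not a technicality that an induction ``peeling off one body at a time'' will discharge. Two concrete obstructions. First, the regions you want to exchange are not congruent: to the left of a body with minimum $m$, maximum $M$, and last column $b+k-1$, the forbidden low region has height $m-1$ while the forbidden high region spans heights $M+1$ through $\lambda_{b+k-1}$; these have different sizes in general, so no reflection of cells carries one onto the other, let alone one preserving the at-most-one-rook-per-row-and-column condition. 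Second, and more fundamentally, the rooks being reflected can themselves create or destroy consecutive copies of $\sigma$, so the set of bodies --- hence the coloring, the bands, and the axes of reflection --- changes under the map. In Theorems~\ref{thm:spswe} and~\ref{thm:oplussigma} the structure defining the coloring sits in cells that the map holds fixed, which is precisely what makes those maps invertible; here the defining structure is itself moved, so the inverse has no way to recover the original decomposition. Your proposal records this problem but supplies no mechanism to solve it, and the base case $|\sigma|=1$ is itself not elementary: it is the shape-Wilf-equivalence $1\d2\swe 2\d1$, a nontrivial theorem.

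The paper resolves the difficulty by abandoning $\sigma$-copies as the organizing structure altogether. It defines a recursive bijection $\zeta_\lambda:\S{\lambda}\to\S{\lambda}$ that never mentions $\sigma$: locate the column $j$ with $\pi_j=1$, move that rook to the top of its column (so $\pi^*_j=\lambda_j$), slide every rook to the right of column $j$ down one row, and recurse on the first $j-1$ columns after deleting their empty rows. Bijectivity is a short minimal-counterexample argument, and the pattern statement is then a separate lemma: if $\pi$ avoids $1\oplus\sigma$, the suffix $\pi_{j+1}\dotsm\pi_n$ avoids $\sigma$ (else it would combine with $\pi_j=1$), so no copy of $1\ominus\sigma$ in $\zeta_\lambda(\pi)$ can have its body to the right of column $j$; no copy can use column $j$, because $\pi^*_j=\lambda_j$ forces the isolated rook's cell above the top of column $j$ and hence outside $\lambda$; and a copy confined to the first $j-1$ columns contradicts minimality through the recursion. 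The lesson, if you want to salvage your approach, is that the bijection must be driven by data the map itself does not disturb --- the position of the minimal entry --- rather than by the positions of the $\sigma$-copies.
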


Theorem \ref{thm:plustominus} strengthens a result in \cite{Elizalde2006, Kitaev2005}, where it is shown $1\oplus\sigma \we 1\ominus\sigma$.   Alternately, Theorem \ref{thm:plustominus} could be viewed as a generalization of $1\d2 \swe 2\d1$, which is the special case $\sigma = 1$.   Computation suggests that the 1 in Theorem \ref{thm:plustominus} cannot be extended to a longer consecutive pattern since  $\sav{\lambda}{12\oplus1} \neq \sav{\lambda}{12\ominus1}$ and $\sav{\lambda}{21\oplus12} \neq \sav{\lambda}{21\ominus12}$ for the board $\lambda = (6,6,6,6,6,5)$.

We now set about proving Theorem \ref{thm:plustominus} by constructing a bijection on the set of fillings of a Young board $\lambda$ such that any filling avoiding a pattern of the form $1\oplus\sigma$ maps to a filling avoiding $1\ominus\sigma$.

We first define a map on standard fillings of Young boards $\zeta_{\lambda}: \S{\lambda} \to \S{\lambda}$, and in Lemma \ref{lem:zetabijection} we will prove $\zeta_\lambda$ is bijective.  We construct $\zeta_{\lambda}$ recursively by $\width{\lambda}$.   
   If $\lambda$ is a Young board of width 0 or 1, let $\zeta_\lambda$ be the identity transformation.  Now assume we have constructed $\zeta_\lambda$ for all Young boards $\lambda$ for $\width{\lambda} < n$ and let $\pi \in \S{\lambda}$ be a standard filling of Young board $\lambda$ with width $n$.  We will construct the standard filling $\pi^* := \zeta_{\lambda}(\pi)$ as follows.

Let $j$ be the index so that $\pi_j=1$ (i.e., the column of the cell filled with a ``1'' in the bottom row), and decompose $\pi = h\,1\,t$ for [possibly empty] words $h$ and $t$.  Let $\pi^*_{j} = \lambda_{j}$, thus filling the top cell in column $j$ with a 1.   Next, since each letter in $t$ is at least $2$, we may set $\pi^*_i = \pi_i -1$ for $i>j$.  In terms of the filling, we move each 1 from the standard filling $\pi$ down by one row.  This fills columns $j, j+1, \dotsc, n$ for $\pi^*$.  We now consider how to fill columns $1, \dotsc, j-1$.

The word $h= \pi_1 \dotsm \pi_{j-1}$ is exactly the restriction of $\pi$ to the Young board $\mu = (\lambda_1, \dotsc, \lambda_{j-1})$.  Note $h$ is a filling of $\mu$ with no empty columns and empty rows $R= \{\pi_j, \pi_{j+1}, \dotsc, \pi_n\}$.    Recall the bijection  $\rho_{\lambda}^{C,R}: \SPH{\lambda}{C,R} \to \S{\lambda^{(C,R)}}$, which deletes the empty rows and columns of a filling.  Applying the map $\rho_{\mu}^{(\emptyset, R)}$ removes the empty rows of $h$ and produces a filling $h'$ of the Young board $\mu' = \mu^{(\emptyset, R)}$.  By induction we may define $h^{*} = \zeta_{\mu'}(h')$ since $\width{\mu'} = j-1 < \width{\lambda}$.

We now fill the first $j-1$ columns of $\lambda$ in such a way that $\pi^*_1 \dotsc \pi^*_{j-1} \oi h^*$ and no row contains more than one $1$.  To see that such filling can always be completed, let $R' = \{\lambda_j, \pi_{j+1}-1, \pi_{j+1}-1, \dotsc, \pi_n -1\}$.   Since $\lambda$ is a Young diagram, every $r\in R \cup R'$ satisfies $r\leq \lambda_{j-1}$.  It follows that $\mu^{(\emptyset, R)} = \mu^{(\emptyset, R')} = \mu'$, and so $\bigl(\rho_{\mu}^{\emptyset, R'}\bigr)^{-1}(h^{*})$ provides a filling of $\mu$ with empty rows in $R'$.  Thus we complete the filling $\pi^*$ by filling columns $1, \dotsc, j-1$ of $\lambda$ according to the filling $\bigl(\rho_{\mu}^{\emptyset, R'}\bigr)^{-1}(h^{*})$ of $\mu$.

For example, consider the Young board $\lambda=(7,7,7,7,7,6,5)$ and the standard filling $\pi = 5736214$, illustrated in Figure \ref{fig:zeta}.  Since $\pi_6 = 1$, we get that $h=57362$ and $t=4$.  We immediately get $\pi^*_6 = \lambda_6 = 6$, and $\pi^*_7 = \pi_7 -1 = 3$.  Now $\mu = (\lambda_1, \dotsc, \lambda_5) = (7,7,7,7,7)$ and $\mu' = \mu^{(\emptyset, \{1,4\})} = (5,5,5,5,5)$.  Now $h' = 35241$ is a standard filling of $\mu'$, and $h^* = \zeta_{\mu'}(35241) = 31425$.  We now apply $\bigl(\rho_{\mu}^{\emptyset, R'}\bigr)^{-1}(h^{*})$ where $R'=\{3,6\}$, and thus fill $\mu$ by $41527$.  Therefore we see the resulting $\pi^* = \zeta_\lambda(\pi) = 4152763$.

\begin{figure}
	\centering
	\includegraphics[width=\textwidth]{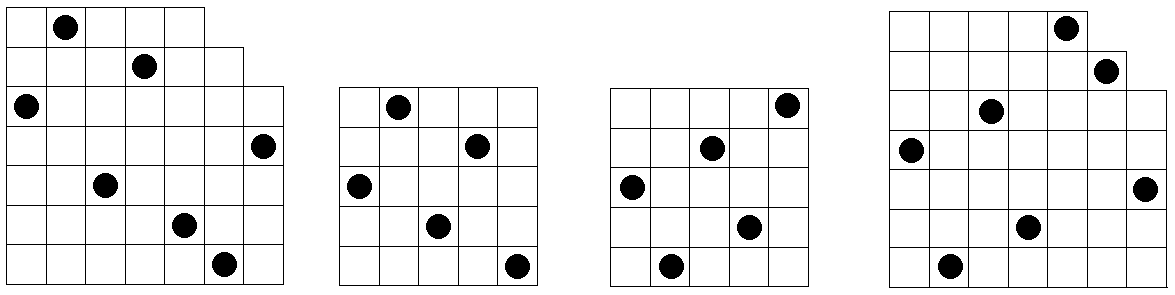}
	\caption{The action of $\zeta_\lambda$ for the filling $\pi=5736214$ of $\lambda=(7,7,7,7,7,6,5)$.  From left to right: the filling $\pi$ of $\lambda$, the filling $h'$ of $\mu'$, the filling $h^*$ of $\mu'$, the filling $\pi^*$ of $\lambda$.}
	\label{fig:zeta}
\end{figure}

The map $\zeta_\lambda$ provides a bijection on the set of standard fillings of a Young board $\lambda$ to itself, as shown in the following lemma.

\begin{lemma}\label{lem:zetabijection}
  The map $\zeta_{\lambda}: \S{\lambda} \to \S{\lambda}$ is a bijection.
\end{lemma}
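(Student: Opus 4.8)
The plan is to prove the statement by strong induction on $n=\width{\lambda}$, constructing an explicit two-sided inverse that reverses the recursive construction of $\zeta_\lambda$ step by step. The base cases $\width{\lambda}\le 1$ are immediate, since there $\zeta_\lambda$ is the identity. For the inductive step I assume $\zeta_\mu$ is a bijection for every Young board $\mu$ with $\width{\mu}<n$, and I show that all the pivot data used to build $\pi^*=\zeta_\lambda(\pi)$ can be recovered from $\pi^*$ alone. (I take as given, from the construction, that $\zeta_\lambda$ is a well-defined map $\S{\lambda}\to\S{\lambda}$.)

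The crux, and the step I expect to be the main obstacle, is recovering the pivot column $j$ (the column holding the ``$1$'' in the bottom row of $\pi$) directly from $\pi^*$. I claim $j$ is the \emph{rightmost} column whose top cell is filled, i.e.\ the largest index $i$ with $\pi^*_i=\lambda_i$. Indeed, by construction $\pi^*_j=\lambda_j$, so column $j$ has its top cell filled; and for every $i>j$ one has $\pi^*_i=\pi_i-1\le \lambda_i-1<\lambda_i$ (using $\pi_i\le\lambda_i$ for a standard filling), so no column to the right of $j$ has its top cell filled. Columns to the left of $j$ may indeed have filled top cells — in the worked example of Figure \ref{fig:zeta} both columns $5$ and $6$ do — but taking the rightmost filled top cell unambiguously returns $j$. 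Establishing this characterization is the heart of the argument, since everything downstream depends on it.

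Once $j$ is known, the remaining data unwinds mechanically. I set $\pi_j=1$ and $\pi_i=\pi^*_i+1$ for $i>j$, which inverts the downward shift of the tail $t$. For the first $j-1$ columns I read off the empty rows of the restriction $\pi^*|_{<j}$ directly, apply $\rho_{\mu}^{\emptyset,R'}$ to delete them and obtain $h^*$ as a standard filling of $\mu'$, then apply $\zeta_{\mu'}^{-1}$ — available since $\width{\mu'}=j-1<n$ — to recover $h'$. Finally I reinsert $h'$ into $\mu$ via $\bigl(\rho_{\mu}^{\emptyset,R}\bigr)^{-1}$ with the empty-row set $R=\{1\}\cup\{\pi_i : i>j\}$, which is now determined; here I invoke the identity $\mu^{(\emptyset,R)}=\mu^{(\emptyset,R')}=\mu'$ noted in the construction to ensure the reinsertion is legitimate.

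To conclude, I verify that this procedure genuinely inverts $\zeta_\lambda$: each step exactly undoes the corresponding construction step, and the inductive hypothesis supplies bijectivity of $\zeta_{\mu'}$, so the composite in either order is the identity. Equivalently, the recovery argument shows that $\pi$ is uniquely determined by $\pi^*$, so $\zeta_\lambda$ is injective; since it is a self-map of the finite set $\S{\lambda}$, injectivity already forces bijectivity, completing the induction.
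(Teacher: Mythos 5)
Your proof is correct and takes essentially the same route as the paper's: both argue by induction on $\width{\lambda}$ and hinge on the identical key observation that $\pi^*_j=\lambda_j$ while $\pi^*_i=\pi_i-1<\lambda_i$ for all $i>j$, so the pivot column is determined by the image, after which the tail and (via the inductive hypothesis on $\zeta_{\mu'}$) the head are forced. The only difference is presentational: you package this as an explicit step-by-step recovery of the inverse, whereas the paper phrases it as injectivity via a minimal-width pair of distinct preimages, with finiteness of $\S{\lambda}$ upgrading injectivity to bijectivity in both arguments.
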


\begin{proof}
It suffices to prove injectivity, since any injection from a finite set to itself is a bijection.  Suppose $\lambda$ is a board of minimum width such that there are distinct standard fillings $\pi, \pi' \in \S{\lambda}$ such that $\zeta_\lambda (\pi) = \zeta_\lambda(\pi') = \pi^*$.  Define indices $j, j'$ such that $\pi_j = 1$ and $\pi'_{j'}=1$.  If $j<j'$, then by the construction of $\zeta_{\lambda}(\pi)$ we see $\pi^{*}_{j'} = \pi_{j'}-1 < \lambda_{j'}$ while the construction of $\zeta_\lambda(\pi')$ implies $\pi^{*}_{j'} = \lambda_{j'}$.  Thus $j=j'$.  For any $i>j$, the definition of $\zeta_{\lambda}$ tells us $\pi^{*}_i = \pi_i-1$ and $\pi^{*}_i= \pi'_i-1$, which confirms that $\pi_i = \pi'_i$ for $i\geq j$.   Thus we must conclude that $h = \pi_1 \dotsm \pi_{j-1}$ and $h' = \pi'_1 \dotsm \pi'_{j-1}$ are distinct.  The definition of $\zeta_{\lambda}$, however, tells us that both $h$ and $h'$ are fillings of $\mu = (\lambda_1, \dotsc, \lambda_{j-1})$ that get mapped to $\pi^{*}_1 \dotsm \pi^{*}_{j-1}$ under the map $\bigl( \rho_{\mu}^{\emptyset, R'} \bigr)^{-1} \circ \zeta_{\mu'} \circ \rho_{\mu}^{\emptyset, R}$.  This implies that $\zeta_{\mu'}$ is not bijective, contradicting the minimality of $\lambda$.  
\end{proof}

To prove Theorem \ref{thm:plustominus}, however, we must consider the sets of fillings $\SPH{\lambda}{C,R}$, and so we extend $\zeta_\lambda$ to fillings with given empty rows and columns.  We again use the bijection  $\rho_{\lambda}^{C,R}: \SPH{\lambda}{C,R} \to \S{\lambda^{(C,R)}}$ and define $\zeta_{\lambda}^{C,R} := \bigl(\rho_{\lambda}^{C,R}\bigr)^{-1} \circ \zeta_{\lambda^{(C,R)}} \circ \rho_{\lambda}^{C,R}$ to produce a bijection $\zeta_{\lambda}^{C,R} : \SPH{\lambda}{C,R} \to \SPH{\lambda}{C,R}$.  The map $\zeta_{\lambda}^{C,R}$ can be visualized as a single map by coloring gray the cells in the empty columns and rows, coloring the remaining cells white, and then applying $\zeta_{\lambda^{(C,R)}}$ to the standard filling of the white subboard.

We state the bijectivity of $\zeta_{\lambda}^{C,R}$ as a lemma on its own.

\begin{lemma}\label{lem:zetaCRbijection}
  The map $\zeta_{\lambda}^{C,R} : \SPH{\lambda}{C,R} \to \SPH{\lambda}{C,R}$ is a bijection.
\end{lemma}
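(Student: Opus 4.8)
The plan is to observe that this lemma is essentially immediate from the definition of $\zeta_{\lambda}^{C,R}$ together with Lemma \ref{lem:zetabijection}. Recall that $\zeta_{\lambda}^{C,R}$ was defined as the composition
\begin{equation*}
 \zeta_{\lambda}^{C,R} = \bigl(\rho_{\lambda}^{C,R}\bigr)^{-1} \circ \zeta_{\lambda^{(C,R)}} \circ \rho_{\lambda}^{C,R},
\end{equation*}
so the entire content of the lemma is that a composition of three bijections is again a bijection. I would therefore spend the proof simply certifying that each of the three factors is indeed a bijection between the stated sets.

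First I would note that $\rho_{\lambda}^{C,R} : \SPH{\lambda}{C,R} \to \S{\lambda^{(C,R)}}$ is a bijection, as established when it was introduced (it deletes the empty rows and columns and order-reduces, with the obvious inverse that reinserts them). Consequently its inverse $\bigl(\rho_{\lambda}^{C,R}\bigr)^{-1} : \S{\lambda^{(C,R)}} \to \SPH{\lambda}{C,R}$ is also a bijection. Next, by Lemma \ref{lem:zetabijection}, the middle map $\zeta_{\lambda^{(C,R)}} : \S{\lambda^{(C,R)}} \to \S{\lambda^{(C,R)}}$ is a bijection on the standard fillings of the reduced board $\lambda^{(C,R)}$; this is legitimate precisely because $\lambda^{(C,R)}$ is itself a Young board, so Lemma \ref{lem:zetabijection} applies to it verbatim. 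Since the codomain of $\rho_{\lambda}^{C,R}$ and the domain of $\zeta_{\lambda^{(C,R)}}$ agree (both are $\S{\lambda^{(C,R)}}$), and likewise for the remaining composition, the three maps compose, and the composite $\zeta_{\lambda}^{C,R}$ carries $\SPH{\lambda}{C,R}$ bijectively to $\SPH{\lambda}{C,R}$.

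I do not expect any genuine obstacle here: the only thing to be careful about is the bookkeeping of domains and codomains, in particular verifying that $\lambda^{(C,R)}$ really is a Young board so that the preceding lemma is applicable, and that the empty-row/empty-column structure is preserved so that the output again lands in $\SPH{\lambda}{C,R}$ rather than some other refinement. Both of these are built into the definitions of $\rho_{\lambda}^{C,R}$ and $\lambda^{(C,R)}$, so the argument reduces to a one-line appeal to ``a composition of bijections is a bijection.'' The value of stating it as a separate lemma is organizational, isolating the extension from standard to general fillings so that the proof of Theorem \ref{thm:plustominus} can invoke $\zeta_{\lambda}^{C,R}$ cleanly.
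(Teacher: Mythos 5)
Your proposal is correct and matches the paper exactly: the paper treats this lemma as immediate from the definition of $\zeta_{\lambda}^{C,R}$ as the composition $\bigl(\rho_{\lambda}^{C,R}\bigr)^{-1} \circ \zeta_{\lambda^{(C,R)}} \circ \rho_{\lambda}^{C,R}$, with $\rho_{\lambda}^{C,R}$ a bijection by construction and $\zeta_{\lambda^{(C,R)}}$ a bijection by Lemma~\ref{lem:zetabijection}, so no separate written proof is even given. Your extra care about domains, codomains, and $\lambda^{(C,R)}$ being a Young board is exactly the right bookkeeping and nothing more is needed.
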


It remains to consider the effect that $\zeta_{\lambda}^{C,R}$ has on patterns of the form $1\oplus\sigma$.

\begin{lemma}\label{lem:zetaCRavoid}
 Let $\sigma$ be a consecutive pattern.  Then $\zeta_{\lambda}^{C,R} \bigl( \SPHav{\lambda}{C,R}{1\oplus\sigma} \bigr) = \SPHav{\lambda}{C,R}{1\ominus\sigma}$.
\end{lemma}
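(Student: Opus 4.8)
The plan is to prove the equivalent biconditional: writing $\pi^* = \zeta_\lambda^{C,R}(\pi)$, a filling $\pi \in \SPH{\lambda}{C,R}$ avoids $1\oplus\sigma$ if and only if $\pi^*$ avoids $1\ominus\sigma$. Since $\zeta_\lambda^{C,R}$ is a bijection (Lemma~\ref{lem:zetaCRbijection}), this yields the asserted equality of images. First I would dispose of the empty rows using Proposition~\ref{prop:emptyrows}: rows in $R$ are irrelevant to vincular containment and $\zeta_\lambda^{C,R}$ deletes and restores them in place, so everything reduces to the standard filling $\rho_\lambda^{C,R}(\pi)$ of $\lambda^{(C,R)}$ together with a record of which consecutive columns of $\lambda$ are genuinely adjacent (not separated by a column of $C$). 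The empty columns must be kept, as they govern adjacency; but $\zeta_\lambda^{C,R}$ preserves column positions, so the admissible positions for a consecutive $\sigma$-block are identical for $\pi$ and $\pi^*$, and, crucially, the detached singleton of $1\oplus\sigma$ or $1\ominus\sigma$ is unaffected by spacers, since no adjacency is required between it and the block.

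Next I would induct on $\width{\lambda^{(C,R)}}$, following the recursive definition of $\zeta$. Let $j$ be the pivot column carrying the lowest filled cell (value $1$); then $\zeta$ sends that cell to the top of its column (value $\lambda_j$), shifts every filled column to the right of $j$ down by one row, and transforms the columns left of $j$ by the induction hypothesis on the smaller board $\mu'$. I would then classify occurrences by the position of the $\sigma$-block relative to column $j$. A block lying entirely left of $j$, together with its detached element, lives entirely in the left part and is governed by induction (with Proposition~\ref{prop:emptyrows} handling the intervening row-deletions). For a block lying entirely right of $j$, the shift-by-one preserves order-isomorphism and all column positions, while the swap sending value $1$ to value $\lambda_j$ is exactly what converts ``a detached minimum to the left'' (the hallmark of $1\oplus\sigma$) into ``a detached maximum to the left'' (the hallmark of $1\ominus\sigma$).

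The essential use of the board is in excluding blocks that contain column $j$. For $1\oplus\sigma$ in $\pi$ this is immediate: the block then contains the global minimum, leaving no room for a smaller detached element. For $1\ominus\sigma$ in $\pi^*$ this is exactly where the upper-right-corner condition does the work: such a block contains the cell of value $\lambda_j$ at column $j$, so a detached maximum must exceed $\lambda_j$, yet the block reaches some column $d\ge j$ with $\lambda_d \le \lambda_j$, forcing the occurrence's corner $(d,v)$ with $v > \lambda_j \ge \lambda_d$ outside the Young board. With these exclusions, the forward implication is clean: if $\pi$ avoids $1\oplus\sigma$ then there is no valid $\sigma$-block to the right of $j$ (else the value-$1$ cell would complete an occurrence) and the left part avoids $1\oplus\sigma$, whence by induction the transformed left part avoids $1\ominus\sigma$; any hypothetical $1\ominus\sigma$ occurrence in $\pi^*$ with block right of $j$ would lift, on shifting up by one, to a valid $\sigma$-block right of $j$ in $\pi$, a contradiction, while blocks meeting or left of $j$ are excluded as above.

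The main obstacle is the reverse implication, which by the same bookkeeping reduces to a single assertion: if $\pi$ contains a valid $\sigma$-block strictly to the right of the pivot column $j$, then $\pi^*$ contains $1\ominus\sigma$. The shifted block remains valid in $\pi^*$, but the natural candidate detached maximum — the pivoted cell now at height $\lambda_j$ in column $j$ — overshoots the ceiling $\lambda_d$ at the block's right end $d>j$, so its corner leaves the board. One must instead exhibit a cell to the left of the block whose value lies in the narrow window $(\,\text{top value of the shifted block},\ \lambda_d\,]$, and such a cell is supplied by the recursively transformed left part. Showing it always exists is the crux, and I expect to handle it by choosing the block extremally (for instance the one ending furthest to the right) and counting the values in that window against the admissibility inequalities $\lambda_i \ge \width{\lambda^{(C,R)}} - i + 1$, so that they cannot all be concealed among the few columns to the right of the block. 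This interplay between the right-hand shift and the left-hand recursion, mediated by the corner condition, is where the real difficulty lies.
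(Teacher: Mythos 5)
Your forward implication (if $\pi$ avoids $1\oplus\sigma$ then $\pi^*$ avoids $1\ominus\sigma$) is, in structure and in nearly every detail, the paper's own proof: reduce to $R=\emptyset$ via Proposition \ref{prop:emptyrows}, induct on width following the recursive construction of $\zeta$, and split a hypothetical occurrence of $1\ominus\sigma$ in $\pi^*$ according to whether its $\sigma$-block lies left of, right of, or across the pivot column $j$, handled respectively by induction, by lifting to a $\sigma$-block of $\pi$, and by the corner-cell argument at the cell of value $\lambda_j$. One point you should spell out: the lift of a block right of $j$ (values shifted up by one) is a \emph{valid} copy in $\pi$ only because the detached maximum of the $1\ominus\sigma$ occurrence supplies the needed corner cell; order-isomorphism alone does not preserve validity, since raising all values by one can push the upper-right corner out of the board. (The paper itself elides this with a bare order-isomorphism claim, which as stated is not a correct inference; your formulation, which lifts the whole $1\ominus\sigma$ occurrence rather than a bare block, is the right one but still leaves the validity of the lift unargued.)

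The genuine gap is the reverse implication. You correctly observe that the stated set equality needs it --- without it one only gets $\zeta_{\lambda}^{C,R}\bigl(\SPHav{\lambda}{C,R}{1\oplus\sigma}\bigr)\subseteq \SPHav{\lambda}{C,R}{1\ominus\sigma}$, which, for what it is worth, is also all that the paper's written proof establishes --- but your treatment of it ends in a plan (``Showing it always exists is the crux, and I expect to handle it by\dots''), not a proof, so the proposal is incomplete exactly at the step you flag as the crux. The fix is much shorter than the extremal-choice-plus-counting scheme you propose. Suppose $\pi$ has a valid $\sigma$-block in consecutive columns $i_1<\dots<i_k$, all $>j$, with maximum value $M+1$; validity of the corresponding copy of $1\oplus\sigma$ gives $M+1\le\lambda_{i_k}$. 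Since $R=\emptyset$, every row of $\lambda$ is occupied in $\pi^*$, so some nonempty column $c^\star$ has $\pi^*_{c^\star}=\lambda_{i_k}$. It cannot be a block column, since the block's values in $\pi^*$ are at most $M<\lambda_{i_k}$; and it cannot satisfy $c^\star>i_k$, since every nonempty column $c>j$ has $\pi^*_c=\pi_c-1\le\lambda_c-1\le\lambda_{i_k}-1$. Hence $c^\star<i_1$, and the cell $(c^\star,\lambda_{i_k})$ together with the shifted block forms a valid occurrence of $1\ominus\sigma$ with corner $(i_k,\lambda_{i_k})\in\lambda$; no adjacency is required between the detached letter and the block, so intervening spacers are harmless. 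Combined with the routine cases (an occurrence of $1\oplus\sigma$ wholly left of $j$ is handled by induction, and its block can never contain column $j$ because the detached letter would need a value below $\pi_j=1$), this closes your reverse direction.
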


\begin{proof}
By Proposition \ref{prop:emptyrows} we may assume $R=\emptyset$ without loss of generality.

 Suppose $\lambda$ is a board of minimum width such that for some $\pi\in\SPHav{\lambda}{C,\emptyset}{1\oplus\lambda}$, the image $\pi^* := \zeta_{\lambda}^{C,\emptyset}(\pi)$ contains a copy of $1\ominus\sigma$.  Let $j$ be the index where $\pi_j = 1$.  To the right of column $j$, we see that $\pi_{j+1} \dotsm \pi_n$ must avoid $\sigma$ or else that copy of $\sigma$ would combine with $\pi_j = 1$ to form a copy of $1\oplus\sigma$ in $\pi$.  Since $\pi_{j+1} \dotsm \pi_n \oi \pi^*_{j+1} \dotsm \pi^*_n$, we see that $\pi^*_{j+1} \dotsm \pi^*_n$ also avoids $\sigma$ (and consequently avoids $1\ominus\sigma$).  Thus the copy of $1\ominus\sigma$ appearing in $\pi^*$ may only use indices in $[j]$.  Suppose $\pi^{*}_i \pi^{*}_{j-k+1} \pi^{*}_{j-k+2} \dotsm \pi^{*}_{j}$ forms a copy of $1\ominus\sigma$.  Then $\pi^*_i > \pi^*_j$, but since $\pi^{*}_j = \lambda_j$, the cell $(j, \pi^{*}_i)$ does not lie in $\lambda$.  Thus there can be no copy of $1\ominus\sigma$ in the filling $\pi^*$ which involves $\pi^*_j$.  Thus $\pi^*_1 \dotsm \pi^{*}_{j-1}$ must contain the copy of $1\ominus\sigma$.

By the inductive definition of $\zeta_{\lambda}$, however, we see that $h=\pi_1 \dotsm \pi_{j-1}$ is a filling of $\mu=(\lambda_1, \dotsc, \lambda_{j-1})$ which also avoids $1\oplus\sigma$.  Under the appropriate $\zeta_\mu^{C', R'}$, the filling $h=\pi_1 \dotsm \pi_{j-1}$ maps to $h^{*} = \pi^*_1 \dotsm \pi^{*}_{j-1}$, and so $h$ avoids $1\oplus\sigma$ while its image $h^{*}$ has a copy of $1\ominus\sigma$.  Since $\width{\mu} < \width{\lambda}$, this contradicts minimality of $\width{\lambda}$.  Thus the lemma is proven.
\end{proof}

The proof of Theorem \ref{thm:plustominus} follows immediately from the combination of Lemmas \ref{lem:zetaCRbijection} and \ref{lem:zetaCRavoid}.  The map $\zeta_{\lambda}^{C,R}$ restricts to a bijection between $\SPHav{\lambda}{C,R}{1\oplus\sigma}$ and $\SPHav{\lambda}{C,R}{1\ominus\sigma}$, and therefore $1\oplus\sigma \spswe 1\ominus\sigma$.  This completes the proof of Theorem \ref{thm:plustominus}.

\subsubsection{Implication for Wilf-classification}

Combining Theorem \ref{thm:spswe}, Theorem \ref{thm:plustominus} for $\sigma=12$, and Corollary \ref{cor:BWX} gives us the following Wilf-equivalences for vincular patterns.   This proves Conjecture 17(c) in \cite{Baxter2012}.

\begin{corollary}
 We have the following Wilf-equivalences:
 \begin{equation*}
   12\d3\d4 \we 12\d4\d3 \we 21\d3\d4 \we 21\d4\d3.
 \end{equation*}
\end{corollary}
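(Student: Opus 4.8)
The plan is to connect all four patterns by routing through filling-shape-Wilf-equivalences of shorter patterns, lifted by Theorem~\ref{thm:spswe}, while using the reverse-complement symmetry (which preserves ordinary Wilf-equivalence) to handle the two links that are only Wilf-equivalent rather than filling-shape-Wilf-equivalent. I would establish the single shape-level bridge $12\d3\d4 \spswe 21\d3\d4$ together with the two links $12\d3\d4 \we 12\d4\d3$ and $21\d3\d4 \we 21\d4\d3$; since $\we$ is transitive and $\spswe$ implies $\we$, these three facts chain the four patterns together as $12\d4\d3 \we 12\d3\d4 \spswe 21\d3\d4 \we 21\d4\d3$.

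For the bridge I would decompose $12\d3\d4 = (12\d3) \oplus 1$ and $21\d3\d4 = (21\d3) \oplus 1$. Corollary~\ref{cor:12-3swe21-3} supplies the filling-shape-Wilf-equivalence $12\d3 \spswe 21\d3$, so applying Theorem~\ref{thm:spswe} with $\sigma = 1$ gives $12\d3\d4 \spswe 21\d3\d4$ directly, and in particular $12\d3\d4 \we 21\d3\d4$.

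For the two links I would first apply the reverse-complement, which sends $12\d3\d4 \mapsto 1\d2\d34$, $12\d4\d3 \mapsto 2\d1\d34$, $21\d3\d4 \mapsto 1\d2\d43$, and $21\d4\d3 \mapsto 2\d1\d43$. Each image is a direct sum of a classical length-two pattern with a common consecutive factor: $1\d2\d34 = (1\d2)\oplus 12$ and $2\d1\d34 = (2\d1)\oplus 12$, while $1\d2\d43 = (1\d2)\oplus 21$ and $2\d1\d43 = (2\d1)\oplus 21$, the right-hand factors $12$ and $21$ being the consecutive patterns of length two. Corollary~\ref{cor:BWX} gives $1\d2 \spswe 2\d1$, so Theorem~\ref{thm:spswe} (appending the common consecutive factor) yields $1\d2\d34 \spswe 2\d1\d34$ and $1\d2\d43 \spswe 2\d1\d43$; undoing the reverse-complement and using that filling-shape-Wilf-equivalence implies Wilf-equivalence turns these into $12\d3\d4 \we 12\d4\d3$ and $21\d3\d4 \we 21\d4\d3$.

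The main obstacle is that the two links cannot be obtained from Theorem~\ref{thm:spswe} in their original orientation: in the natural decomposition $12\d3\d4 = 12 \oplus (1\d2)$ and $12\d4\d3 = 12 \oplus (2\d1)$ the varying classical factor sits on the right, where Theorem~\ref{thm:spswe} cannot act, and indeed these patterns are only Wilf-equivalent and not filling-shape-Wilf-equivalent. The role of the reverse-complement is precisely to move the varying factor into the left summand, after which Theorem~\ref{thm:spswe} applies; carrying out and verifying these reverse-complement images is the one place where care is required. Note finally that one could reach $12\d3 \we 21\d3$ from $1\d23 \spswe 3\d12$ of Theorem~\ref{thm:plustominus} together with the trivial symmetries, but that produces only a plain Wilf-equivalence, which cannot be fed into Theorem~\ref{thm:spswe}; this is why the shape-level input of Corollary~\ref{cor:12-3swe21-3} is the version needed for the bridge.
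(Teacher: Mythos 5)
Your proposal is correct, but it does not follow the justification the paper gives at the point where this corollary is stated; instead it reproduces (and in effect repairs) the paper's earlier derivation of the identical statement, Corollary \ref{cor:Wilf45}(c). At the corollary itself the paper claims the result follows from ``Theorem \ref{thm:spswe}, Theorem \ref{thm:plustominus} for $\sigma=12$, and Corollary \ref{cor:BWX},'' whereas you use Corollary \ref{cor:12-3swe21-3} (i.e.\ Theorem \ref{thm:oplussigma}) for the bridge. The difference is substantive, and your closing remark pinpoints why: Theorem \ref{thm:plustominus} for $\sigma=12$ yields $1\d23 \spswe 3\d12$, and from that one can only reach, via Theorem \ref{thm:spswe}, equivalences such as $1\d23\d4 \spswe 3\d12\d4$ (the class of Corollary \ref{cor:classH}), or, via trivial symmetries, the plain Wilf-equivalence $12\d3 \we 21\d3$, which cannot be re-lifted by Theorem \ref{thm:spswe}. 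Neither route produces the cross-link $12\d3\d4 \we 21\d3\d4$, since the only $\oplus$-decompositions of $12\d3\d4$ are $(12\d3)\oplus 1$ and $12\oplus(1\d2)$, and the cited ingredients supply no pattern filling-shape-Wilf-equivalent to $12\d3$ or to the consecutive pattern $12$ (the latter equivalence is in fact false). So the ingredient list in the paper at this spot appears to be a miscitation, and the filling-shape-level input $12\d3 \spswe 21\d3$ of Corollary \ref{cor:12-3swe21-3} is genuinely required, exactly as you use it. Your other two links --- passing to reverse-complements so that the varying classical factor $1\d2$ versus $2\d1$ sits in the left summand, applying Corollary \ref{cor:BWX} and Theorem \ref{thm:spswe} with the common consecutive factor $12$ or $21$ appended, and then undoing the symmetry --- agree with what both of the paper's derivations intend, and all of your reverse-complement computations and decompositions check out.
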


Computing $\sav{8}{\sigma}$ for all $\sigma$ of length 4 verifies that these patterns and their trivial symmetries make up the entirety of the equivalence class.

Theorem \ref{thm:spswe} in conjunction with Theorems \ref{thm:plustominus} for $\sigma=21$, and the equivalence $1\d23\d4 \we 1\d32\d4$ (first proven in \cite{Elizalde2006}) implies the following corollary:
   \begin{corollary}\label{cor:classH}
     We have the following Wilf-equivalences:
     \begin{equation*}
      3\d12\d4 \we 1\d23\d4 \we 1\d32\d4 \we 3\d21\d4.
     \end{equation*}
   \end{corollary}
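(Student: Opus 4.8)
The plan is to obtain each of the three displayed Wilf-equivalences in Corollary~\ref{cor:classH} as the shadow of a filling-shape-Wilf-equivalence, using only Theorem~\ref{thm:plustominus}, Theorem~\ref{thm:spswe}, the equivalence $1\d23\d4 \we 1\d32\d4$ of \cite{Elizalde2006}, and the fact that filling-shape-Wilf-equivalence implies Wilf-equivalence. The key observation is that each length-four pattern in the chain is a direct sum of a length-three pattern of the form $1\oplus\sigma$ or $1\ominus\sigma$ with the singleton $1$, so Theorem~\ref{thm:spswe} lets me promote the length-three equivalences coming from Theorem~\ref{thm:plustominus} up to length four.

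First I would record the two relevant instances of Theorem~\ref{thm:plustominus}. Taking the consecutive pattern $\sigma=12$ gives $1\oplus 12 \spswe 1\ominus 12$; unwinding the direct- and skew-sum definitions yields $1\oplus 12 = 1\d23$ and $1\ominus 12 = 3\d12$, so $1\d23 \spswe 3\d12$. Taking $\sigma=21$ gives $1\oplus 21 = 1\d32$ and $1\ominus 21 = 3\d21$, so $1\d32 \spswe 3\d21$. Next I would apply Theorem~\ref{thm:spswe} with the trivial consecutive pattern $\sigma=1$ to each of these. Since direct-summing with $1$ sends $1\d23, 3\d12, 1\d32, 3\d21$ to $1\d23\d4, 3\d12\d4, 1\d32\d4, 3\d21\d4$ respectively (the new $4$ is appended across an $\oplus$-seam, which adds no adjacency and hence appears after a dash), Theorem~\ref{thm:spswe} upgrades the two equivalences to $1\d23\d4 \spswe 3\d12\d4$ and $1\d32\d4 \spswe 3\d21\d4$. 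Passing to ordinary Wilf-equivalence, these give $3\d12\d4 \we 1\d23\d4$ and $1\d32\d4 \we 3\d21\d4$.

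Finally I would splice in the middle link: Elizalde's equivalence $1\d23\d4 \we 1\d32\d4$ joins the two halves, producing the full chain $3\d12\d4 \we 1\d23\d4 \we 1\d32\d4 \we 3\d21\d4$ asserted in Corollary~\ref{cor:classH}. The only real labor is the bookkeeping in the middle paragraph: I must apply the vincular definitions of $\oplus$ and $\ominus$ carefully to confirm the dash sets (each internal adjacency originating inside the consecutive block $\sigma$, while the $\oplus$- and $\ominus$-seams become dashes). I do not expect a genuine obstacle, since the argument is a direct composition of already-proved results; the one point demanding care is that the trivial reverse and complement symmetries cannot be used in place of the $\sigma=12$ instance, because $3\d12\d4$ lies in a symmetry class disjoint from the other three patterns, so both the $\sigma=12$ and $\sigma=21$ cases of Theorem~\ref{thm:plustominus} are genuinely needed.
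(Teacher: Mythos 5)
Your proposal is correct and takes essentially the same route as the paper: the paper likewise obtains the two outer links by applying Theorem~\ref{thm:spswe} (with the singleton $\sigma=1$) to the instances $1\d23 \spswe 3\d12$ and $1\d32 \spswe 3\d21$ of Theorem~\ref{thm:plustominus}, and splices in the equivalence $1\d23\d4 \we 1\d32\d4$ of \cite{Elizalde2006} as the middle link. If anything, your write-up is more careful than the paper's one-sentence justification, which explicitly cites only the $\sigma=21$ instance of Theorem~\ref{thm:plustominus}, even though (as you correctly observe) the $\sigma=12$ instance is also genuinely needed to reach $3\d12\d4$, whose symmetry class is disjoint from those of the other three patterns.
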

  
There appears to be an additional member of this equivalence class.  Computation of $\sav{n}{\sigma}$ for $n\leq 11$ suggests the following conjecture.
   \begin{conjecture}\label{con:classH}
    We have the equivalence $23\d1\d4 \we 1\d23\d4$.
   \end{conjecture}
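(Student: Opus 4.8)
The plan is to first see why the obvious route fails, and then commit to a direct argument on full permutations. The two patterns share a common summand: $23\d1\d4 = (23\d1)\oplus 1$ and $1\d23\d4 = (1\d23)\oplus 1$, and the summands are already plain-Wilf-equivalent, since $23\d1 = (3\d12)^{rc}$ and $3\d12 \we 1\d23$ by \eqref{eqn:equivs}, so $23\d1 \we 1\d23$ by the trivial symmetries. The obstacle is that plain Wilf-equivalence is \emph{not} closed under $\oplus$, so this does not yield the conjecture; the property that \emph{is} closed under $\oplus$ is filling-shape-Wilf-equivalence (Theorem~\ref{thm:spswe}), and that upgrade fails here. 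Indeed, the computer search described around \eqref{eqn:equivs} shows the only shape-Wilf-equivalent length-$3$ one-dash pairs are the three listed there, so $23\d1$ is not shape-Wilf-equivalent to $1\d23$ (otherwise $\{1\d23,\,3\d12,\,23\d1\}$ would be a shape-equivalent triple, producing more than three pairs). Hence none of the sum-closure machinery of Section~\ref{sec:SWEandWE} applies, and the equivalence must be proven at the level of full permutations in $\S{n}$.

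First I would rephrase containment for each pattern. A permutation $\pi\in\S{n}$ contains $1\d23\d4$ exactly when there is an adjacent ascent $\pi_b < \pi_{b+1}$ together with an \emph{earlier} letter $\pi_a < \pi_b$ (with $a<b$) and a later letter $\pi_d > \pi_{b+1}$ (with $d>b+1$); it contains $23\d1\d4$ exactly when there is an adjacent ascent $\pi_b < \pi_{b+1}$ followed by a letter $\pi_c < \pi_b$ and then a still-later letter $\pi_d > \pi_{b+1}$, with $b+1 < c < d$. The two avoidance conditions differ only in where the ``small'' witness sits relative to the ascent --- before it for $1\d23\d4$, after it for $23\d1\d4$ --- while the adjacent ascent and the ``large'' witness play identical roles in both. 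The target bijection must therefore migrate the small witness across the ascent while preserving the adjacency of the ascent and the existence and relative position of the large witness.

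The plan for the bijection is to recurse on the largest letter, in the spirit of the map $\zeta_\lambda$ built for Theorem~\ref{thm:plustominus}. I would peel off the value $n$: since $n$ can only serve as a ``large'' witness, removing it and recording how it interacts with the adjacent ascents to its left should reduce $23\d1\d4$-avoidance on $\S{n}$ to a weighted avoidance problem on $\S{n-1}$, and symmetrically for $1\d23\d4$. Matching the two recursions amounts to showing that the boundary data each pattern exposes --- essentially which adjacent ascents already have a witness above and which already have one below --- is the same generating object in both cases. As a rigorous fallback, since both counting sequences are already matched numerically for $n\le 11$, I would instead set up an enumeration scheme in the sense of \cite{Baxter2012} for each pattern and verify that the two schemes refine to the same recurrence, or equivalently derive a common functional equation for the two generating functions by the kernel method.

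The hard part will be the bookkeeping in the recursion. Unlike the consecutive-pattern arguments of Lemma~\ref{lem:consrow} and Theorem~\ref{thm:oplussigma}, here the non-adjacent links in ``$\d1\d$'' and ``$1\d23\d$'' create long-range dependencies among the small witness, the ascent, and the large witness, so deleting $n$ does not cleanly localize the forbidden configuration. Controlling how the relative position of the small witness changes under the recursion --- without either creating a new occurrence or destroying a required non-occurrence --- is precisely the step that has no analogue in the shape-Wilf setting, where the ascent and its witnesses would be confined to a single white subboard. This is where the genuine difficulty lies, and is the reason the statement remains a conjecture rather than a corollary of the preceding machinery.
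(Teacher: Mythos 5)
You are right about the status of this statement, and it is worth saying plainly: the paper does \emph{not} prove it. It is stated as Conjecture \ref{con:classH}, supported only by computation of $\sav{n}{\sigma}$ for $n\leq 11$, together with the remark that computation reveals $23\d1 \not\spswe 1\d23$, so Theorem \ref{thm:spswe} cannot be applied. Your preliminary analysis reproduces this diagnosis correctly: the symmetry computation $23\d1 = (3\d12)^{rc}$, hence $23\d1 \we 1\d23$ via the equivalence $3\d12 \we 1\d23$, is accurate; your rephrasing of containment for both patterns (small witness before the adjacent ascent versus after it) is accurate; and your explanation of why the sum-closure machinery of Section \ref{sec:SWEandWE} is unavailable matches the paper's own remark.

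However, what you have written is not a proof, and the gap is the entire mathematical content. The bijection or recursion that would establish $\sav{n}{23\d1\d4} = \sav{n}{1\d23\d4}$ for all $n$ is never constructed: ``peel off the value $n$ and match the boundary data of the two recursions'' is a restatement of the problem, not a solution to it. You give no definition of the boundary data, no proof that deleting $n$ preserves or reflects avoidance (indeed you concede it does not localize the forbidden configuration), and no matching argument between the two recursions. The fallback is equally incomplete: agreement of the counting sequences for $n\leq 11$ proves nothing, and the enumeration-scheme route in the sense of \cite{Baxter2012} would require actually exhibiting finite schemes for both patterns and proving them isomorphic --- neither of which is done, and neither of which is guaranteed to be possible, since a finite scheme need not exist for a given vincular pattern and equal counts do not force isomorphic schemes. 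So your write-up is an accurate account of the obstruction, and its conclusion that the statement should remain a conjecture is consistent with the paper; but as a proof attempt it establishes nothing beyond what the paper already records, and it should be presented as a discussion of the conjecture's status rather than as a proof.
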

Note that computation reveals $23\d1 \not\spswe 1\d23$, so Theorem \ref{thm:spswe} cannot apply.

\section{Conclusions and Future Work}\label{sec:Conclusion}

The results above complete the shape-Wilf-classification of vincular patterns of length 3, and contributes towards the Wilf-classification of vincular patterns of length 4 and above.  There appear to be more Wilf-equivalences, such as those discussed above:

\begin{conjecture*}
    The following equivalences are true:
 \begin{itemize}
 \item[] \textbf{Conjecture \ref{con:Conj1}.} $12\d3\d4\d5 \we 12\d4\d3\d5$
 \item[] \textbf{Conjecture \ref{con:classH}.} $23\d1\d4 \we 1\d23\d4$.
 \end{itemize}
\end{conjecture*}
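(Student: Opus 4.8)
The plan is to prove both equivalences by direct enumeration on the square board, since the filling-shape-Wilf machinery is genuinely unavailable here. The only decompositions compatible with Theorem \ref{thm:spswe} would peel a trailing $1$ and require the shape-Wilf-equivalences $12\d3\d4\spswe12\d4\d3$ (for Conjecture \ref{con:Conj1}) and $23\d1\spswe1\d23$ (for Conjecture \ref{con:classH}), and the text records that \emph{both} of these fail. I therefore give up the stronger shape statement and aim only for equality of $\sav{n}{\sigma}$, working with ordinary permutations of $[n]$.

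For Conjecture \ref{con:Conj1}, observe that $12\d3\d4\d5 = 12\oplus123$ and $12\d4\d3\d5 = 12\oplus213$, where in each case the leading $12$ is consecutive and the trailing triple is \emph{classical}. A permutation $\pi$ avoids $12\oplus\tau$ exactly when, for every ascent-top (a position $q$ with $\pi_{q-1}<\pi_q$), the subword $U_q$ formed by the entries right of position $q$ whose value exceeds $\pi_q$ avoids the classical pattern $\tau$. The first step is to tame the global ``for every ascent-top'' quantifier: if $q_1<q_2$ are ascent-tops with $\pi_{q_1}\le\pi_{q_2}$ then $U_{q_2}$ is a subsequence of $U_{q_1}$, so only the ascent-tops minimal in the (position, value) order impose constraints. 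The second step is to set up a system of functional equations for refined avoidance generating functions for $12\oplus123$- and $12\oplus213$-avoiders, carrying a catalytic variable that records the value of the final entry together with the length of the terminal ascending run, and to show the two systems coincide because $123$ and $213$ obey the same recursive bookkeeping when appended as a classical tail to a region sitting above an ascent. I expect this to require a kernel-method argument rather than a term-by-term matching.

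For Conjecture \ref{con:classH}, the target is to show that $23\d1\d4$ joins the established class $\{3\d12\d4,\,1\d23\d4,\,1\d32\d4,\,3\d21\d4\}$ of Corollary \ref{cor:classH}. Unwinding the definitions, $\pi$ contains $23\d1\d4$ iff it has a consecutive ascent followed (classically) by a smaller value and then a larger value, while $\pi$ contains $1\d23\d4$ iff it has a value followed by a consecutive ascent above it and then a still-larger value; the forced adjacency has migrated from the front of the pattern to its interior. Since no shape-Wilf bridge exists, I would build an explicit bijection between $\Sav{n}{23\d1\d4}$ and $\Sav{n}{1\d23\d4}$ (or any class-H representative), most plausibly by a recursion on the position of the largest entry $n$ or on the leftmost consecutive ascent, tracking how the adjacency constraint relocates and verifying that avoidance in the two sub-permutations recurses identically.

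A unifying, more computational fallback for both statements is the enumeration-scheme framework of \cite{Baxter2012}: one computes a (hopefully finite) enumeration scheme for each pattern in a pair and exhibits an isomorphism of schemes, which would simultaneously certify the Wilf-equivalence and explain it structurally. The principal risk is that the relevant schemes may fail to be finite, or be finite but non-isomorphic, in which case the refined generating-function and bijective analyses above are the necessary route. In every approach the crux — and the reason these remain conjectural — is the same: the single forced adjacency breaks the clean shape-by-shape inductions used elsewhere in the paper, so any successful argument must exploit global features of the full square board rather than local, shape-preserving replacements, and this is where I expect the main difficulty to lie.
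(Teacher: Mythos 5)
You have not produced a proof, and in fairness none exists to compare against: the statement you were assigned is presented in the paper as an open conjecture, supported only by finite computation ($\sav{n}{\sigma}$ for $n\leq 8$ in the case of Conjecture \ref{con:Conj1} and $n\leq 11$ for Conjecture \ref{con:classH}) together with the negative results blocking the obvious routes through Theorem \ref{thm:spswe} (the paper records $12\d3\d4 \not\spswe 12\d4\d3$, $1\d2\d3 \not\spswe 1\d3\d2$, and $23\d1 \not\spswe 1\d23$). Your preliminary observations are sound and match the paper's framing: the decompositions $12\d3\d4\d5 = 12\oplus(1\d2\d3)$ and $12\d4\d3\d5 = 12\oplus(2\d1\d3)$ are correct, the reduction to minimal ascent-tops in the (position, value) order is a true and potentially useful lemma, and your unwinding of the containment conditions for $23\d1\d4$ and $1\d23\d4$ is accurate. (One small inaccuracy: the paper notes Conjecture \ref{con:Conj1} could also follow from $1\d2\d3 \spswe 1\d3\d2$ with $\sigma=12$, after a reverse-complement symmetry, so peeling a trailing $1$ is not the only decomposition compatible with Theorem \ref{thm:spswe} — though that route fails as well.)

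The genuine gap is that none of your three strategies is carried out, and the step you wave at in each case is precisely where the difficulty lives. For Conjecture \ref{con:Conj1}, the assertion that the two generating-function systems ``coincide because $123$ and $213$ obey the same recursive bookkeeping'' is exactly the conjecture restated: classical Wilf-equivalence of $1\d2\d3$ and $2\d1\d3$ gives equality of avoidance \emph{counts}, but your catalytic setup requires equality of refined distributions over the subwords $U_q$ sitting above the minimal ascent-tops, and these subwords overlap and interact, so no known equidistribution result applies off the shelf. For Conjecture \ref{con:classH}, a recursion on the position of $n$ or on the leftmost consecutive ascent does not behave stably under the interior adjacency of $1\d23\d4$: deleting an entry can create or destroy the consecutive ascent that the pattern requires, so the two avoidance conditions do not visibly recurse in parallel, and no bijection is actually defined. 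The enumeration-scheme fallback via \cite{Baxter2012} is likewise only named, with the feasibility question (finiteness and isomorphism of schemes) left open. What you have written is an honest and well-informed research plan whose closing sentence correctly diagnoses why these equivalences remain conjectural; it should be graded as such, not as a proof.
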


It is not clear whether filling-shape-Wilf-equivalence is a strictly stronger equivalence than shape-Wilf-equivalence.  No examples have been found for pairs of vincular patterns which are shape-Wilf-equivalent but not filling-shape-Wilf-equivalent, so we ask the question: \textit{Is there a pair of patterns $\alpha$ and $\beta$ such that $\alpha\swe\beta$ but $\alpha\not\spswe\beta$?}

Since the results in this paper depend primarily on filling-shape-Wilf-equivalence, it is worth searching for more examples of such equivalences.  Consecutive patterns seem a natural starting point, and Elizalde and Noy find Wilf-equivalences for consecutive patterns of length 4 in \cite{Elizalde2003}.  Four of those equivalences appear to extend to filling-shape-Wilf-equivalence based on an computer search over Young boards $\lambda$ with $\width{\lambda}\leq 8$.
\begin{conjecture}\label{con:cons4}
We have the following filling-shape-Wilf-equivalences for consecutive patterns:
 \begin{enumerate}[(a)]
  \item $1342 \spswe 1432$ 
  \item $2341 \spswe 2431$
  \item $3124 \spswe 3214$
  \item $4123 \spswe 4213$
 \end{enumerate}
\end{conjecture}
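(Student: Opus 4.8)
The plan is to prove all four equivalences in parallel, exploiting their common shape. In each pair the second pattern is obtained from the first by transposing the entries in positions $2$ and $3$, and in each case those two entries are adjacent in value: the top two values $3,4$ in pairs (a) and (b), and the bottom two values $1,2$ in pairs (c) and (d). So I would fix generic names $\alpha$ and $\beta$ for the two patterns of a single pair and run one construction, letting only the bookkeeping differ between cases. The first reduction is to standard fillings. By Corollary \ref{cor:emptyrows} it suffices to match $\sphav{\lambda}{C,\emptyset}{\alpha}$ with $\sphav{\lambda}{C,\emptyset}{\beta}$ for all Young boards $\lambda$ and all column sets $C$. Since $\alpha$ and $\beta$ are consecutive, a copy occupies fully consecutive columns, none of which can be a spacer; hence the empty columns recorded by $C$ cut $\lambda$ into maximal runs of non-spacer columns, each of which is itself a Young board, and a filling avoids the pattern if and only if its restriction to every run does. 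Applying Proposition \ref{prop:emptyrows} inside each run to delete the rows it does not meet turns that run into a standard filling of a strictly narrower Young board (narrower whenever $C\neq\emptyset$). Thus the conjecture reduces to producing, for every Young board $\mu$, a single bijection $\Psi_\mu:\Sav{\mu}{\alpha}\to\Sav{\mu}{\beta}$ between $\alpha$- and $\beta$-avoiding standard fillings; reassembling the runs and reinserting the empty rows then yields the required bijection on $\SPHav{\lambda}{C,\emptyset}{\alpha}$, exactly as in the proof of Lemma \ref{lem:consrow}.

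To construct $\Psi_\mu$ I would induct on $\width{\mu}$, splitting off a distinguished cell in the spirit of the map $\zeta_\lambda$ from Subsection \ref{sec:1-23swe3-12}. The pivot should be the cell carrying one of the two swapped values. For pairs (c) and (d), the swapped values are $1,2$, so I pivot on the bottom-row cell: writing the filling as $h\,1\,t$ around the column holding the value $1$, the only copies of $\alpha$ or $\beta$ that can use this cell are those in which the $1$ plays the role of the pattern's minimal entry, and these are confined to the handful of columns surrounding the pivot. For pairs (a) and (b), the swapped values are the top two, so I pivot instead on the top-row cell (the cell holding the maximal value $\height{\mu}$). In either case the recursion removes the pivot column, records the empty row it creates, applies the bijection furnished on the narrower board, and then performs the local switch of the two middle entries that realises $\alpha\leftrightarrow\beta$.

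The main obstacle is the feature that makes consecutive patterns delicate: copies overlap, so a single switch of two middle entries can destroy one copy while creating another at a neighbouring starting column, and near the staircase boundary of $\mu$ a would-be copy may fail the requirement that its upper-right corner $(i+3,\max)$ lie in $\mu$. Controlling both effects simultaneously is the crux of the argument. I expect the right device to be the right-to-left maximal copy idea already used in Theorem \ref{thm:oplussigma}: by processing the candidate frames from right to left, each switch can be made in a region that no later switch disturbs, while the corner condition is charged to the rightmost column of each frame. An alternative route is to take the bijection of Elizalde and Noy \cite{Elizalde2003} that establishes the underlying consecutive Wilf-equivalences $\alpha\we\beta$ and verify directly that (a suitable version of) it preserves the set of occupied rows and respects the corner condition; indeed the content of the conjecture is essentially that this bijection can be made board-aware.

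Finally, I would stress one pitfall that cannot be used to shorten the work: although (a), (b), (c), and (d) are related to one another by reverse and complement, these symmetries carry Young boards to top- or right-justified shapes and therefore do not preserve the class over which filling-shape-Wilf-equivalence is tested. Consequently no pair can be deduced from another by symmetry, and each must be run through the same construction, with only the choice of pivot (bottom row versus top row) and the identity of the switched values changing between cases.
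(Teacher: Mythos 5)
You should be clear at the outset about what you are up against: the paper does \emph{not} prove this statement. Conjecture \ref{con:cons4} is left open, supported only by a computer search over Young boards with $\width{\lambda}\leq 8$, so there is no proof of record to compare yours to, and your write-up must stand on its own as new mathematics. Its preprocessing step does stand up: since a copy of a consecutive pattern occupies consecutive, non-spacer columns, the columns in $C$ cut $\lambda$ into maximal runs which are themselves Young boards; Proposition \ref{prop:emptyrows} contracts away the rows each run misses, and Corollary \ref{cor:emptyrows} disposes of $R$. This correctly shows that for consecutive patterns filling-shape-Wilf-equivalence coincides with shape-Wilf-equivalence (a genuine analogue of Corollary \ref{cor:emptyrowsandcolumns}), and your closing remark that none of the four pairs can be deduced from another by symmetry agrees with the paper's own observation.

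The gap is that everything now rests on producing $\Psi_\mu\colon\Sav{\mu}{\alpha}\to\Sav{\mu}{\beta}$ for arbitrary Young boards $\mu$ --- that is, on the shape-Wilf-equivalence itself --- and you never construct it. Your pivot-and-recurse sketch breaks exactly where you yourself flag the difficulty: deleting the pivot column makes its two neighboring columns adjacent, so copies of a consecutive pattern can be created across the junction and destroyed elsewhere. The $\zeta_\lambda$ recursion of Subsection \ref{sec:1-23swe3-12} survives precisely because in $1\oplus\sigma$ the leading letter is not required to be adjacent to the $\sigma$ part, so no copy can straddle the removed column; that protection is unavailable for the fully consecutive patterns $1342$, $2341$, $3124$, $4123$. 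Invoking right-to-left maximal copies is a hope, not an argument: you state no invariant, no induction hypothesis, and no verification that the local switch of the two middle entries respects the corner condition $(i_k,\max)\in\mu$ along the staircase boundary. The fallback route is both circular and unavailable: you concede that making the Elizalde--Noy correspondence ``board-aware'' is ``essentially the content of the conjecture,'' and in any case \cite{Elizalde2003} establishes its length-4 equivalences analytically (generating functions and linear differential equations for non-overlapping patterns beginning with $1$), not bijectively, so there is no bijection there to adapt. Note also Table \ref{tab:consecutive3}: no two consecutive patterns of length $3$ are even shape-Wilf-equivalent, so the equivalence you need is delicate and will not fall out of generic overlap bookkeeping. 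As written, your proposal is a sensible research plan with a correct reduction, but it is not a proof.
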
 
Note that these are all symmetric versions of the same Wilf-equivalence, but since the trivial symmetries do not extend to filling-shape-Wilf-equivalence they must be listed separately.

Last, note that many of the ideas above generalize to the broader context of mesh patterns, which were introduced in \cite{Branden2011} as a generalization of bivincular patterns and vincular patterns.  A mesh pattern $(\sigma, R)$ is a pair $\sigma\in\S{k}$ and $R\subseteq \{0,1,\dotsc,k\} \times \{0,1,\dots,k\}$.  If $(\alpha, R)$ and $(\beta, S)$ are mesh patterns such that $\alpha\in\S{k}$, $\beta\in\S{\ell}$, $(k,k)\notin R$, and $(0,0)\notin S$, then we may define $(\alpha, R) \oplus (\beta,S) := (\alpha\oplus\beta, T)$ where 
\begin{equation*}
 \begin{split}
 T := R &\cup \{(i+k, j+k): (i,j)\in S\} \\
             &\cup \{(i,j): (i, k)\in R, j\in\{k, k+1, \dotsc, k+\ell\} \} \\
             &\cup \{(i+k,j): (i,0)\in S, j\in\{0,1,\dotsc, k\} \} \\
             &\cup \{(i,j): (k,j)\in R, i\in\{k, k+1, \dotsc, k+\ell\} \} \\
             &\cup \{(i,j+k): (0,j)\in S, i\in\{0,1,\dotsc, k\} \}
 \end{split}
\end{equation*}

In words, $T$ is constructed by translating the shaded cells in $S$ to their corresponding positions in $\alpha\oplus\beta$, and the shaded cells on the edges of $(\alpha, R)$ and $(\beta, S)$ expand to fill out their rows and columns.  The shaded cells in $R$ which lie on the right edge of $(\alpha, R)$ expand to shade the remaining portion of the row to the right.  Similarly, shaded cells on left edge of $(\beta, S)$ expand to shade the cells to their left, shaded cells on the top edge of $(\alpha, R)$ expand to shade the cells above them, and shaded cells on the bottom edge of $(\beta, S)$ expand to shade the cells below them.

We illustrate this in Figure \ref{fig:meshsum} with the direct sum of $(312, \{(1,1), (2,3), (3,0), (3,2)\})$ and $(1234, \{(0,2), (2,0), (3,3), (3,4)\})$
\begin{figure}
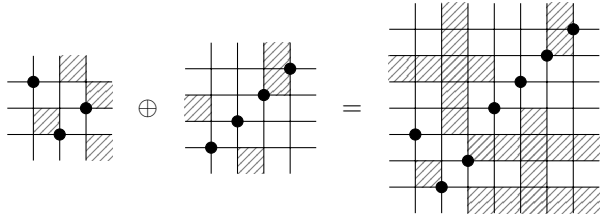

 \centering
\pattern{}{3}{ 1/3, 2/1, 3/2}{1/1, 2/3, 3/0, 3/2} 
$\oplus$
\pattern{}{4}{ 1/1, 2/2, 3/3, 4/4}{0/2, 2/0, 3/3, 3/4}
$=$
\pattern{}{7}{1/3, 2/1, 3/2,  4/4, 5/5, 6/6, 7/7}{  1/1, 2/3, 3/0, 3/2 ,    3/5, 5/3, 6/6, 6/7,     2/4, 2/5, 2/6, 2/7,    5/0, 5/1, 5/2,  3/2, 4/2, 5/2, 6/2, 7/2,    0/5, 1/5, 2/5 ,   4/0, 5/0, 6/0, 7/0 }
 \caption{The direct sum of two mesh patterns}
 \label{fig:meshsum}
\end{figure}

 It is left to the reader to verify that the proof of Theorem \ref{thm:spswe} extends to the case that $\alpha$, $\beta$, and $\sigma$ are mesh patterns such that $\alpha\oplus\sigma$ and $\beta\oplus\sigma$ are well-defined.  This suggests that a search for filling-shape-Wilf-equivalence over mesh patterns could yield infinite families of Wilf-equivalences among the mesh patterns.  Such a search has yet to be carried out.

\bigskip

{\bf Acknowledgements:} The author would like to thank Einar Steingr\'{i}msson, Alexander Burstein, and Lara Pudwell for helpful comments during the preparation of this article.


\end{document}